\newtheorem{thm}{Theorem}[section]
\newtheorem{cor}[thm]{Corollary}
\newtheorem{lem}[thm]{Lemma}
\newtheorem{defn}[thm]{Definition}
\newtheorem{rem}[thm]{\bf{Remark}}
\newtheorem{exam}[thm]{Example}
\numberwithin{equation}{section}
\begin{document}

\title{generalized right group inverse in Banach *-algebras}

\author{Huanyin Chen}
\author{Marjan Sheibani}
\address{School of Big Data, Fuzhou University of International Studies and Trade, Fuzhou 350202, China}
\email{<huanyinchenfz@163.com>}
\address{Farzanegan Campus, Semnan University, Semnan, Iran}
\email{<m.sheibani@semnan.ac.ir>}

\subjclass[2020]{16U90, 15A09, 16W10.}\keywords{right group inverse; generalized right Drazin inverse; generalized right core-EP inverse;
right core inverse; Banach algebra.}

\begin{abstract} In this paper, we introduce the concept of the generalized right group inverse within the context of a *-Banach algebra. This represents a natural extension of the generalized (weak) group inverse. Notably, this generalized inverse is characterized by integrating the right group inverse with the concept of quasinilpotency. We provide various characterizations and representations of the generalized right group inverse. Furthermore, we explore the relationship between the generalized right group inverse and the generalized right EP-inverse. The properties of the generalized (weak) group inverse in a Banach *-algebra are also extended to a more general framework.\end{abstract}

\maketitle

\section{Introduction}

A Banach algebra is called a Banach *-algebra if there exists an involution $*: x\to x^*$ satisfying $(x+y)^*=x^*+y^*, (\lambda x)^*=\overline{\lambda} x^*, (xy)^*=y^*x^*, (x^*)^*=x$. The involution $*$ is proper if $x^*x=0\Longrightarrow x=0$ for any $x\in \mathcal{A}$. Let ${\Bbb C}^{n\times n}$ be the Banach algebra of all $n\times n$ complex matrices, with conjugate transpose $*$ as the involution. Then the involution $*$ is proper. Throughout the paper,
$\mathcal{A}$ denotes a Banach *-algebra with proper involution.

An element $a\in \mathcal{A}$ has a group inverse if there exists an
$x\in \mathcal{A}$ such that $$xa^2=a, ax^2=x, ax=xa.$$
Such an $x$ , if it exists, is unique and is denoted by
$a^{\#}$, termed the group inverse of $a$. Clearly, a square complex matrix
$A$ possesses a group inverse if and only if $rank(A)=rank(A^2)$. Group inverse plays an important role in matrix and operator theory (see~\cite{B2,M1,M2}).

An element $a\in \mathcal{A}$ has a weak group inverse there exist $x\in R$ and $n\in \Bbb{N}$ such that
$$ax^2=x, (a^*a^2x)^*=a^*a^2x, xa^{n+1}=a^n.$$ If such $x$ exists, it is unique, and denote it by $a^{\tiny\textcircled{W}}$. Evidently, a square complex matrix
$A$ possesses a group inverse if and only if the system of equations $$AX^2=X, AX=A^{\tiny\textcircled{\dag}}A$$ is solvable.
Here, $A^{\tiny\textcircled{\dag}}$ stands for the core-EP inverse of $A$ (see~\cite{G,M1,M2,W1}).
For further reading on the weak group inverse, we refer the reader to the following papers ~\cite{F,D,M3,M4,M5,W2,Y1,Z1,Z2,Z3,Z4}.

Following Chen and Sheibani (see~\cite{C3}), an element $a\in \mathcal{A}$ has generalized group inverse if there exists $x\in \mathcal{A}$ such that $$ax^2=x, (a^*a^2x)^*=a^*a^2x, \lim\limits_{n\to \infty}||a^n-xa^{n+1}||^{\frac{1}{n}}=0.$$ Such $x$ is unique if exists, denoted by $a^{\tiny\textcircled{g}}.$ Many properties of generalized group inverse were investigated in ~\cite{C1,C2,C3}. For a complex matrix, the generalized group inverse and weak group inverse coincide with each other.

Recently, various one-sided generalized inverses are introduced and studied. Following Yan (see~\cite{Y2}), an element $a\in \mathcal{A}$ has right group inverse provided that there exists $x\in \mathcal{A}$ such that $$ax^2=x, a^2x=axa=a.$$ We use $\mathcal{A}_r^{\#}$ to denote the set of all right group invertible elements in $\mathcal{A}$.

An $a\in \mathcal{A}$ has generalized right Drazin inverse if there exists $x\in \mathcal{A}$ such that $$ax^2=x, a^2x=axa, a-axa\in \mathcal{A}^{qnil}.$$ Here, $$\mathcal{A}^{qnil}=\{x\in \mathcal{A}~\mid~ \lim\limits_{n\to \infty}\parallel x^n\parallel^{\frac{1}{n}}=0\}.$$ As is well known, $x\in \mathcal{A}^{qnil}$ if and only if $1+\lambda x\in \mathcal{A}$ is invertible. We use $\mathcal{A}_r^{d}$ to denote the set of all generalized right Drazin
invertible elements in $\mathcal{A}$. Many characterizations of generalized right (left) Drazin inverse are established in ~\cite{MB,Y2}.

The motivation of this paper is to introduce and study a new kind of generalized inverse as a natural generalization of the generalized inverses mentioned above.

In Section 2, we introduce the concept of a generalized right group inverse based on a novel form of generalized right group decomposition. Interestingly, this generalized inverse can be characterized by integrating the right group inverse with the notion of quasinilpotency.

\begin{defn} An element $a\in \mathcal{A}$ has generalized right group decomposition if there exist $x,y\in \mathcal{A}$ such that $$a=x+y, x^*y=yx=0, x\in
\mathcal{A}_r^{\#}, y\in \mathcal{A}^{qnil}.$$\end{defn}

We prove that $a\in \mathcal{A}$ has generalized right group inverse if there exists a $x\in \mathcal{A}$ such that $$x=ax^2, (a^*a^2x)^*=a^*a^2x, \lim\limits_{n\to \infty}||a^n-axa^{n}||^{\frac{1}{n}}=0.$$ The preceding $x$ is called the generalized right group inverse of $a$, and denoted by $a_r^{\tiny\textcircled{g}}$.

In Section 3, we present some characterizations and representations of generalized right group inverse. We prove that $a\in \mathcal{A}$ has generalized right group inverse if and only if $a\in \mathcal{A}_r^d$ and there exist $x\in \mathcal{A}$ such that $(aa_r^d)^*(aa_r^d)x=(aa_r^d)^*a.$
In this case, $a_r^{\tiny\textcircled{g}}=(a_r^d)^2aa_r^dx.$

An element $a\in \mathcal{A}$ has generalized right core-EP inverse if there exists $x\in \mathcal{A}$ such that $$ax^2=x, (ax)^*=ax, \lim\limits_{n\to
\infty}||a^n-axa^{n}||^{\frac{1}{n}}=0.$$ If such a $x$ exists, we denote it by $a_r^{\tiny\textcircled{d}}$.

Finally, in Section 4, we discuss the relationship between the generalized right group inverse and the generalized right EP-inverse. Consequently, the properties of the weak (generalized) group inverse for Hilbert space operators are expanded to a more general framework (refer to~\cite{C3,M3,M5}).

\section{generalized right group inverse}

The purpose of this section is to introduce a new generalized inverse which is a natural generalization of generalized (weak) group inverse in a Banach *-algebra. We begin with

\begin{lem} Let $a\in \mathcal{A}_r^{\#}$. Then $a\in \mathcal{A}_r^{d}$.\end{lem}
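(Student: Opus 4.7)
The plan is direct: show that the witness $x$ for the right group inverse of $a$ automatically satisfies all three defining conditions of the generalized right Drazin inverse, so the same $x$ transfers from one structure to the other.

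More explicitly, I will start by unpacking the hypothesis $a \in \mathcal{A}_r^{\#}$ into the three identities $ax^2 = x$, $a^2x = a$, and $axa = a$ given by the definition. Then I will check the three conditions required for $a \in \mathcal{A}_r^d$ in turn: the equation $ax^2 = x$ is immediate; the equation $a^2 x = axa$ follows since both sides equal $a$; and the quasinilpotency condition $a - axa \in \mathcal{A}^{qnil}$ reduces to $a - a = 0$, which is trivially in $\mathcal{A}^{qnil}$ because $\lim_{n\to\infty}\|0^n\|^{1/n} = 0$.

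Since $0$ is quasinilpotent and both algebraic identities needed in the Drazin definition are consequences of the stronger identities in the right group definition, there is no obstacle to overcome — the implication is essentially a tautological weakening. The only thing worth noting for the reader is that this lemma records the hierarchy $\mathcal{A}_r^{\#} \subseteq \mathcal{A}_r^d$ and justifies using the notation $a_r^d$ (or later $a_r^{\tiny\textcircled{g}}$) whenever $a$ is right group invertible, which will be invoked silently in subsequent sections.
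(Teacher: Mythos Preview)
Your proof is correct and is precisely the direct verification the paper has in mind when it writes ``Straightforward.'' You simply check that the same witness $x$ for $a\in\mathcal{A}_r^{\#}$ satisfies the three defining conditions of $\mathcal{A}_r^{d}$, the only nontrivial point being $a-axa=0\in\mathcal{A}^{qnil}$, which you handle explicitly.
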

\begin{proof} Straightforward.\end{proof}

\begin{lem} Let $a\in \mathcal{A}$. Then $a\in \mathcal{A}_r^{\#}$ if and only if there exists $x\in \mathcal{A}$ such that
 $$ax^2=x=xax, a^2x=axa, a=xa^2.$$\end{lem}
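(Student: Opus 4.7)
My plan is to prove both directions, with the easier being $(\Leftarrow)$. Suppose $x\in\mathcal{A}$ satisfies the four equations $ax^2=x$, $xax=x$, $a^2x=axa$, and $xa^2=a$. The crucial step is to establish commutativity $ax=xa$ via the chain
\[
ax=(xa^2)x=x(a^2x)=x(axa)=(xax)a=xa,
\]
using the identities $a=xa^2$, $a^2x=axa$, and $xax=x$ in that order. With commutativity in hand, $xa^2=a$ rewrites as $xa\cdot a=a$, and combined with $ax=xa$ this gives $axa=a$, and then $a^2x=a\cdot ax=a\cdot xa=axa=a$. Together with $ax^2=x$, these are exactly Yan's three defining equations, so $a\in\mathcal{A}_r^{\#}$.

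For $(\Rightarrow)$, suppose $a\in\mathcal{A}_r^{\#}$ with witness $x_0$ satisfying $ax_0^2=x_0$, $a^2x_0=a$, and $ax_0a=a$. Set $p:=ax_0$; a direct check gives $p^2=p$ and $ap=pa=a$. I propose the candidate $x:=x_0ax_0$ and verify three of the four required equations. Using $ax_0a=a$ and $ax_0^2=x_0$ repeatedly: $ax=(ax_0a)x_0=ax_0=p$; $ax^2=p\cdot x=(px_0)(ax_0)=x_0\cdot ax_0=x$; $xax=x_0(ax_0a)x_0ax_0=(x_0a)(x_0ax_0)=x_0(ax_0a)x_0=x$; and $a^2x=ap=a=pa=axa$, so $a^2x=axa$.

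The remaining condition $xa^2=a$ is the technical crux. Direct expansion gives $xa^2=x_0(ax_0a)a=x_0a^2$, which is not immediately equal to $a$ from Yan's three axioms alone. I would then examine the idempotent $q:=x_0a$: Yan's conditions yield $q^2=q=aq$ and $pq=qp=q$, so $r:=p-q$ is an idempotent orthogonal to $q$ with $ar=0$, and the desired identity $x_0a^2=a$ is equivalent to $ra=0$. Closing this gap---either by refining the candidate $x$ iteratively (so as to absorb the asymmetry between $p$ and $q$), or by appealing to the Banach-algebra structure of $\mathcal{A}$ (for instance through the chain $a=a^{n+1}x_0^n$ obtained by iterating $a^2x_0=a$, together with a limiting argument that produces a left factorization $a\in\mathcal{A}a^2$)---is the main obstacle I anticipate.
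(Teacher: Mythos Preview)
Your analysis has actually uncovered a genuine defect in the printed lemma rather than a gap in your own reasoning. The chain
\[
ax=(xa^{2})x=x(a^{2}x)=x(axa)=(xax)a=xa
\]
that you use for $(\Leftarrow)$ is valid, and it shows that the four displayed conditions force $ax=xa$; together with $xa^{2}=a$ and $xax=x$ this makes $x$ a \emph{two-sided} group inverse of $a$. But Example~2.11 of the paper exhibits an element $\sigma\in\mathcal{A}_r^{\#}$ that is not even generalized Drazin invertible, hence certainly not group invertible. Thus the forward implication of the lemma, as literally stated with the clause $a=xa^{2}$, is false. Your difficulty in proving $x_0a^{2}=a$ from Yan's three axioms is therefore not a missing trick---that identity simply does not follow.

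The paper's own proof confirms this reading: it sets $z=x_0ax_0$ and verifies $az^{2}=z=zaz$ and $a^{2}z=aza=a$, then stops, never checking $za^{2}=a$. Moreover, when the properties of $a_r^{\#}$ are written out later (e.g.\ in the proof of Lemma~4.1) they appear as ``$ax^{2}=x=xax,\ a^{2}x=axa=a$'', with no left condition $xa^{2}=a$. So the intended statement is that $a\in\mathcal{A}_r^{\#}$ iff there exists $x$ with
\[
ax^{2}=x=xax,\qquad a^{2}x=axa=a,
\]
the reflexive refinement of Yan's axioms. Under this corrected reading the $(\Leftarrow)$ direction is indeed trivial (Yan's three conditions are a subset), and for $(\Rightarrow)$ your candidate $z=x_0ax_0$ already works: you have checked $az=ax_0$, $az^{2}=z$, $zaz=z$, $a^{2}z=a$, and $aza=a$, which is the complete list. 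No limiting or Banach-algebra argument is required; the clause $a=xa^{2}$ should simply be dropped.
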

\begin{proof} One direction is trivial. Conversely, assume that there exists $x\in \mathcal{A}$ such that
 $$ax^2=x, a^2x=axa=a.$$ Set $z=xax$. Then we check that
 $$\begin{array}{rll}
 az&=&axax=ax,\\
 az^2&=&ax(xax)=(ax^2)ax=xax=z,\\
 zaz&=&x(axa)xax=x(axa)x=xax=z,\\
 a^2z&=&a^2x=a,\\
 aza&=&a(xax)a=(axa)xa=axa=a.
 \end{array}$$ Therefore $a\in \mathcal{A}_r^{\#}$, as desired.\end{proof}

The preceding $x$ in Lemma 2.2 is called the right group inverse of $a$, we denote it by $a_r^{\#}$. We use
$\{ a_r^{\#}\}$ to stand the set of all right group inverses of $a$.

\begin{lem} Let $a\in \mathcal{A}$. Then $a\in \mathcal{A}_r^{d}$ if and only if there exists $x\in \mathcal{A}$ such that
$$ax^2=x=xax, a^2x=axa, a-axa\in \mathcal{A}^{qnil}.$$\end{lem}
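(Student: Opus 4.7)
The plan is to mimic exactly the strategy used in Lemma 2.2, since the statement here is the direct analogue of Lemma 2.2 with the group-inverse defining relation $a = axa$ weakened to the quasinilpotency of $a - axa$. One implication is immediate: if $x$ satisfies the enhanced system on the right, then in particular $ax^2 = x$, $a^2x = axa$, and $a - axa \in \mathcal{A}^{qnil}$, which is exactly the definition of $a \in \mathcal{A}_r^d$.

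For the converse, given $x \in \mathcal{A}$ with $ax^2 = x$, $a^2x = axa$, and $a - axa \in \mathcal{A}^{qnil}$, I would set $z := xax$ and verify that $z$ satisfies the four required identities, together with the quasinilpotency condition. The workhorse computation is the identity $az = ax$, which falls out of the hypotheses via
\[
az \;=\; axax \;=\; (axa)x \;=\; (a^2x)x \;=\; a(ax^2) \;=\; ax.
\]
Once this is available, every remaining relation becomes a short manipulation. In particular $az^2 = (az)(xax) = ax \cdot xax = (ax^2)(ax) = x \cdot ax = z$, and similarly $zaz = z(az) = (xax)(ax) = x(axa)x = x(a^2x)x = xa(ax^2) = xax = z$. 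For the middle identity, $aza = a(xax)a = (axa)xa = (a^2x)(xa) = a(ax^2)a = axa$ and $a^2z = a(az) = a \cdot ax = a^2x = axa$, so $a^2z = aza$ as required.

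The quasinilpotency condition transfers for free: since $aza = axa$, we have $a - aza = a - axa \in \mathcal{A}^{qnil}$ by hypothesis. Thus $z = xax$ witnesses the enhanced system, and the proof is complete.

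There is no serious obstacle here; the lemma is essentially a bookkeeping upgrade of Lemma 2.2. The one point to handle with care is to verify that the substitution $x \mapsto xax$ preserves all of $ax^2 = x$, $a^2x = axa$, and quasinilpotency of $a - axa$, rather than accidentally needing a stronger relation such as $a = axa$ that is not available in the generalized right Drazin setting. As the computations above show, every use only needs $ax^2 = x$ and $a^2x = axa$, so the argument goes through.
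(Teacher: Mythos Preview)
Your proof is correct, and it is actually more direct than the paper's. Both arguments finish the same way: set $z=xax$ and verify the four relations together with $a-aza=a-axa\in\mathcal{A}^{qnil}$. The difference lies in how one obtains the key identity $az=ax$ (equivalently, $(ax)^2=ax$). You derive it purely algebraically from $axa=a^2x$ and $ax^2=x$ via
\[
axax=(axa)x=(a^2x)x=a(ax^2)=ax,
\]
never touching the quasinilpotency hypothesis. The paper instead proves $(a-axa)axa=0$, deduces $(a-axa)^n=a^n-axa^n$, and then uses $a-axa\in\mathcal{A}^{qnil}$ to force $\lim_{n\to\infty}\|ax-(ax)^2\|^{1/n}=0$, hence $(ax)^2=ax$. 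Your route is shorter and shows that the added relation $x=xax$ does not actually depend on quasinilpotency at all; the paper's route, while longer, has the side benefit of establishing $\lim_{n\to\infty}\|a^n-axa^n\|^{1/n}=0$, an estimate that recurs throughout the later sections.
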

\begin{proof} One direction is obvious. Conversely, assume that there exists $x\in \mathcal{A}$ such that
 $$ax^2=x, a^2x=axa, a-axa\in \mathcal{A}^{qnil}.$$ Then $(a-axa)axa=a^2xa-axa^2xa=a^2xa-a^3x^2a=a^2xa-a^2(ax^2)a=0$, and so
 $(a-axa)^2=(a-axa)a-(a-axa)axa=(a-axa)a$. This implies that $$\begin{array}{rll}
 (a-axa)^n&=&(a-axa)^{n-2}(a-axa)^2=(a-axa)^{n-2}(a-axa)a\\
 &=&(a-axa)^{n-3}(a-axa)^2a=(a-axa)^{n-2}a^2\\
 &=&\cdots =(a-axa)a^{n-1}=a^n-axa^n.
 \end{array}$$ Since $a-axa\in \mathcal{A}^{qnil}$, we have $\lim\limits_{n\to \infty}||(a-axa)^n||^{\frac{1}{n}}=0.$ Hence, $\lim\limits_{n\to \infty}||a^n-axa^n||^{\frac{1}{n}}=0.$ We directly verify that
 $$\begin{array}{rll}
 ax-(ax)^2&=&ax-axax\\
 &=&a^nx^n-axa^nx^n=(a^n-axa^n)x^n\\
 &=&(a-axa)^nx^n
 \end{array}$$ Then $$\lim\limits_{n\to \infty}||ax-(ax)^2||^{\frac{1}{n}}=0.$$
 This implies that $(ax)^2=ax$. Set $z=xax$. Then we verify that $az^2=z=zaz, a^2z=aza$. Moreover, we have
 $a-aza=a-a(xax)a=a-axa\in \mathcal{A}^{qnil}$, as asserted.\end{proof}

The preceding $x$ in Lemma 2.3 is called the generalized right Drazin inverse of $a$, we denote it by $a_r^{d}$. Set $$\{ a_r^d\}=\{ x\in \mathcal{A}~|~ax^2=x=xax, a^2x=axa, a-xa^2\in \mathcal{A}^{qnil}\}.$$ Evidently,
$a\in \mathcal{A}_r^d$ if and only if $\{ a_r^d\}\neq \emptyset$.

\begin{lem} Let $a\in \mathcal{A}_r^{d}$ and $b\in \mathcal{A}^{qnil}$. If $ba=0$, then $a+b\in \mathcal{A}_r^{d}$. In this case,
$$(a+b)_r^d=a_r^d[1+\sum\limits_{n=1}^{\infty}(a_r^d)^nb^n].$$\end{lem}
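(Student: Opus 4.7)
My approach is a structural reduction followed by direct verification, hinging on one crucial collapsing identity.

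Set $x := a_r^d$ and decompose $a = a_1 + a_2$ where $a_1 := axa$ and $a_2 := a - axa$. Using Lemma~2.3 ($ax^2 = x = xax$, $a^2x = axa$, $a_2 \in \mathcal{A}^{qnil}$), direct computation yields $a_1 a_2 = a_2 a_1 = 0$ and confirms that $x$ is a right group inverse of $a_1$. The identity $xax = x$ also gives $xa_2 = 0$, which will matter at the end. Since $ba = 0$ forces $ba_1 = 0$ and $ba_2 = 0$, setting $c := a_2 + b$ yields $a + b = a_1 + c$ with $ca_1 = 0$ and $c \in \mathcal{A}^{qnil}$; the latter follows from the standard fact that if $u, v \in \mathcal{A}^{qnil}$ and $vu = 0$, then $u + v \in \mathcal{A}^{qnil}$ (applied to $ba_2 = 0$). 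The problem is thereby reduced to the following substatement: if $u \in \mathcal{A}_r^\#$ with right group inverse $x$ and $v \in \mathcal{A}^{qnil}$ with $vu = 0$, then $u+v \in \mathcal{A}_r^d$ with $(u+v)_r^d = \sum_{n \geq 0} x^{n+1} v^n$.

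In this reduced setting, the key observation is $vx = 0$: from $ux^2 = x$ and $vu = 0$ we compute $vx = v(ux^2) = (vu)x^2 = 0$. This collapses nearly every cross term: $v^n x = 0$ and $v^n u = 0$ for $n \geq 1$, and in particular $vy = 0$ where $y := \sum_{n \geq 0} x^{n+1}v^n$. The series converges absolutely since $\|x^{n+1} v^n\|^{1/n} \to 0$ ($\|v^n\|^{1/n} \to 0$ and $\|x^{n+1}\|^{1/n}$ is bounded). The conditions $(u+v)y^2 = y$, $y(u+v)y = y$, and $(u+v)^2 y = (u+v)y(u+v)$ now reduce to routine telescopic checks using $vx = 0$ together with $ux^{n+1} = x^n$ for $n \geq 1$; along the way $(u+v)y$ collapses to $ux + \sum_{n \geq 1} x^n v^n$.

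The delicate step is quasi-nilpotency of $w := (u+v) - (u+v)y(u+v)$. I would avoid term-by-term bookkeeping by first establishing the factorization
\[
w = \bigl(1 - (u+v)y\bigr)\,v,
\]
and then the dual identity $v\bigl(1 - (u+v)y\bigr) = v$ (which uses $vu = 0$ and $vy = 0$). A trivial induction then gives $w^k = \bigl(1 - (u+v)y\bigr) v^k$ for every $k \geq 1$, so $\|w^k\|^{1/k} \to 0$ follows at once from $v \in \mathcal{A}^{qnil}$.

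Finally, to match the stated formula, use $xa_2 = 0$. Expanding $c^n = \sum_{i+j=n} a_2^i b^j$ (valid because $ba_2 = 0$) and left-multiplying by $x^{n+1}$ annihilates every term with $i \geq 1$, leaving $x^{n+1}c^n = x^{n+1}b^n$. Thus $y = \sum_{n \geq 0} x^{n+1} b^n = a_r^d\bigl[1 + \sum_{n=1}^\infty (a_r^d)^n b^n\bigr]$, as required. The main obstacle is the quasi-nilpotency of $w$; the factorization above is the clean shortcut that avoids an otherwise unwieldy estimate.
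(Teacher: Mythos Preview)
Your argument is correct. The route, however, differs from the paper's in two notable respects.

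First, the paper works directly with $x = a_r^d$ and $y = x\bigl[1 + \sum_{n\ge 1} x^n b^n\bigr]$, verifying the four defining conditions for $(a+b)_r^d$ by hand without any preliminary decomposition of $a$. Your reduction $a = a_1 + a_2$ (absorbing the quasinilpotent part $a_2$ into $b$ to form $c$) is an extra structural layer; it buys you the convenience of working with a genuine right \emph{group} inverse rather than a right generalized Drazin inverse, which makes several identities (e.g.\ $u^2x = u$, $uxu = u$) available immediately rather than only asymptotically.

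Second, and more substantively, the quasinilpotency of $w = (a+b) - (a+b)y(a+b)$ is handled differently. The paper rewrites $w$ as $(a - a^2x) + \bigl[1 - a^2x\sum_{n\ge 1} x^n b^{n-1}\bigr]b$ and then invokes two external results: Cline's formula to pass from $b[\cdots] \in \mathcal{A}^{qnil}$ to $[\cdots]b \in \mathcal{A}^{qnil}$, and an additivity lemma (if $p,q \in \mathcal{A}^{qnil}$ with $qp = 0$ then $p+q \in \mathcal{A}^{qnil}$). Your factorization $w = \bigl(1 - (u+v)y\bigr)v$ together with $v\bigl(1 - (u+v)y\bigr) = v$ yields $w^k = \bigl(1 - (u+v)y\bigr)v^k$ by a one-line induction, which is cleaner and essentially self-contained---it is in effect a bare-hands proof of the relevant instance of Cline's formula. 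You still rely on the additivity lemma once (for $c = a_2 + b \in \mathcal{A}^{qnil}$), so the external dependencies are comparable overall, but your treatment of the critical step is more transparent.
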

\begin{proof} Let $x\in \{ a_r^d\}$ and $y=x[1+\sum\limits_{n=1}^{\infty}x^nb^n]$. Since $ba=0$, we see that $bx=(ba)x^2=0$. Then we verify that
$$\begin{array}{rll}
(a+b)y^2&=&ay^2=(ay)y\\
&=&ax[1+\sum\limits_{n=1}^{\infty}x^nb^n]x[1+\sum\limits_{n=1}^{\infty}x^nb^n]\\
&=&(ax^2)x[1+\sum\limits_{n=1}^{\infty}x^nb^n]=y,\\
y(a+b)y&=&yay=x[1+\sum\limits_{n=1}^{\infty}x^nb^n]ax[1+\sum\limits_{n=1}^{\infty}x^nb^n]\\
&=&(xax)[1+\sum\limits_{n=1}^{\infty}x^nb^n]=x[1+\sum\limits_{n=1}^{\infty}x^nb^n]=y,\\
(a+b)^2y&=&(a+b)ay=a^2y=(a^2x)[1+\sum\limits_{n=1}^{\infty}x^nb^n]\\
&=&(axa)[1+\sum\limits_{n=1}^{\infty}x^nb^n]=axa+(axa)\sum\limits_{n=1}^{\infty}x^nb^n\\
&=&axa+(a^2x)\sum\limits_{n=1}^{\infty}x^nb^n=axa+axb+\sum\limits_{n=2}^{\infty}x^{n-1}b^n\\
&=&axa+axb+\sum\limits_{n=1}^{\infty}x^{n}b^{n+1}\\
&=&axa+axb+ax\sum\limits_{n=1}^{\infty}x^nb^{n+1}]\\
&=&axa+ax[b+\sum\limits_{n=1}^{\infty}x^nb^{n+1}]\\
&=&aya+ayb=ay(a+b)=(a+b)y(a+b),\\
\end{array}$$
$$\begin{array}{rll}
(a+b)-(a+b)^2y&=&(a+b)-(a^2+ab+b^2)x[1+\sum\limits_{n=1}^{\infty}x^nb^n]\\
&=&(a+b)-a^2x[1+\sum\limits_{n=1}^{\infty}x^nb^n]\\\\
&=&(a-a^2x)+[1-a^2x\sum\limits_{n=1}^{\infty}x^nb^{n-1}]b.
\end{array}$$

Since $b[1-a^2x\sum\limits_{n=1}^{\infty}x^nb^{n-1}]=b\in \mathcal{A}^{qnil}$,
by using Cline's formula (see~\cite[Theorem 2.2]{L2}), $[1-a^2x\sum\limits_{n=1}^{\infty}x^nb^{n-1}]b\in \mathcal{A}^{qnil}$.
As $[1-a^2x\sum\limits_{n=1}^{\infty}x^nb^{n-1}]b(a-a^2x)=0$ and $a-a^2x\in \mathcal{A}^{qnil}$, it follows by~\cite[Lemma 2.4]{CK} that
$(a-a^2x)+[1-a^2x\sum\limits_{n=1}^{\infty}x^nb^{n-1}]b\in \mathcal{A}^{qnil}$.
That is, $(a+b)-y(a+b)^2\in \mathcal{A}^{qnil}$. This implies that $\{ (a+b)_r^d\}\neq \emptyset$, and therefore
$a+b\in \mathcal{A}_r^{d}$.\end{proof}

\begin{lem} Let $a\in \mathcal{A}_r^d$ and $z\in \{ a_r^d\}$. Then $\lim\limits_{n\to \infty}||(a^n-aza^n)^*||^{\frac{1}{n}}=0.$
\end{lem}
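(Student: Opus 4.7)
The plan is to mimic the inductive computation from the proof of Lemma 2.3, now applied to $z$ itself, producing the identity $(a-aza)^n = a^n - aza^n$, and then to transfer quasinilpotency across the involution by means of the algebraic spectral criterion recalled in the introduction.

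First I would verify that $(a-aza)\cdot aza = 0$. Using the relations $a^2z = aza$ and $zaz = z$ that hold for any $z\in\{a_r^d\}$, the product $(aza)(aza)$ rewrites as $(a^2z)(aza) = a^2(zaz)a = a^2za$; hence $(a-aza)(aza) = a^2za - a^2za = 0$. Exactly as in Lemma 2.3 this gives $(a-aza)^2 = (a-aza)a$, and a routine induction then delivers $(a-aza)^n = (a-aza)a^{n-1} = a^n - aza^n$ for every $n\geq 1$.

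Next I would apply the involution. Since $*$ is anti-multiplicative, $((a-aza)^n)^* = ((a-aza)^*)^n$, so
\[
(a^n - aza^n)^{*} \;=\; \bigl((a-aza)^{*}\bigr)^{n}.
\]
The task is therefore reduced to showing $(a-aza)^{*}\in\mathcal{A}^{qnil}$. For this I would invoke the spectral characterization of $\mathcal{A}^{qnil}$ stated in the introduction: because $a-aza\in\mathcal{A}^{qnil}$, the element $1+\lambda(a-aza)$ is invertible for every $\lambda\in\mathbb{C}$. Taking adjoints (and using $1^{*}=1$) turns this into the invertibility of $1+\overline{\lambda}(a-aza)^{*}$ with inverse the adjoint of the original inverse, so as $\lambda$ ranges over $\mathbb{C}$ we conclude that $1+\mu(a-aza)^{*}$ is invertible for every $\mu\in\mathbb{C}$. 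Hence $(a-aza)^{*}\in\mathcal{A}^{qnil}$, which by definition yields $\lim_{n\to\infty}\bigl\|\bigl((a-aza)^{*}\bigr)^{n}\bigr\|^{1/n}=0$, as required.

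I anticipate no serious obstacle: the inductive identity is a direct transcription of the calculation already carried out in Lemma 2.3, and the only conceptual point is the stability of $\mathcal{A}^{qnil}$ under $*$, which is a purely algebraic consequence of the spectral criterion and in particular requires no continuity hypothesis on the involution.
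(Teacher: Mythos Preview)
Your proposal is correct and follows essentially the same route as the paper: both arguments establish the identity $(a-aza)^n = a^n - aza^n$ via the vanishing of $(a-aza)$ on $aza$ (the paper proves the slightly stronger $(a-aza)az=0$, which immediately yields your identity), and both transfer quasinilpotency across the involution using the spectral criterion $1+\lambda x\in\mathcal{A}^{-1}$. Your presentation is in fact a bit cleaner, since you first prove the algebraic identity and then apply $*$, whereas the paper carries the adjoints through the inductive norm computation.
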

\begin{proof} Let $x=a-aza$. Then $x\in \mathcal{A}^{qnil}$. For any $\lambda \in {\Bbb C}$, we have $1-\overline{\lambda} x\in \mathcal{A}^{-1}$, and so
$1-\lambda x^*\in \mathcal{A}^{-1}$. Hence, $x^*\in \mathcal{A}^{qnil}$.
Obviously, we have $$\begin{array}{rll}
(a-aza)az&=&a^2z-(aza)az=a^2z-(a^2z)az=a^2z-a(aza)z\\
&=&a^2z-a(a^2z)z=a^2z-a^3z^2=a^2z-a^2(az^2)\\
&=&a^2-a^2z=0,
\end{array}$$ and so $$(a-aza)(1-az)=a-aza.$$
Then
$$\begin{array}{rll}
||(a^n-aza^n)^*||&=&||(a^{n-1})^*(a-aza)^*||\\
&=&||(a^{n-1})^*(1-az)^*(a-aza)^*||\\
&=&||(a^{n-2})^*[(a-aza)^2]^*||\\
&\vdots&\\
&=&||[(a-aza)^n]^*||=||(x^*)^n||.
\end{array}$$ Since $\lim\limits_{n\to \infty}||(x^*)^n||^{\frac{1}{n}}=0,$ we have $\lim\limits_{n\to \infty}||(a^n-aza^n)^*||^{\frac{1}{n}}=0.$\end{proof}

\begin{thm} Let $a\in \mathcal{A}$. Then the following are equivalent:\end{thm}
\begin{enumerate}
\item [(1)] $a\in \mathcal{A}$ has generalized right group decomposition, i.e., $$a=a_1+a_2, a_1^*a_2=a_2a_1=0, a_1\in \mathcal{A}_r^{\tiny\textcircled{\#}}, a_2\in \mathcal{A}^{qnil}.$$
\vspace{-.5mm}
\item [(2)] There exists $x\in \mathcal{A}$ such that $$x=ax^2, (a^*a^2x)^*=a^*a^2x, \lim\limits_{n\to \infty}||a^n-axa^{n}||^{\frac{1}{n}}=0.$$
\vspace{-.5mm}
\item [(3)] $a\in \mathcal{A}_r^{d}$ and there exist $x\in \mathcal{A}$ such that $$x=ax^2, (aa_r^d)^*a^2x=(aa_r^d)^*a, \lim\limits_{n\to \infty}||a^n-axa^{n}||^{\frac{1}{n}}=0.$$
\end{enumerate}
\begin{proof} $(1)\Rightarrow (3)$  By hypothesis, $a$ has the generalized right group decomposition $a=a_1+a_2$.
Let $x\in \{ (a_1)_r^{\#} \}$. Then $ax=(a_1+a_2)x=a_1x+(a_2a_1)x^2=a_1x$. Hence $ax^2=a_1x^2=x$.

Since $a_2a_1=0$ and $a_2\in \mathcal{A}^{qnil}$, it follows by Lemma 2.4 that
$a\in \mathcal{A}_r^d$ and
$$a_r^d=(a_1)_r^{\#}+\sum\limits_{n=1}^{\infty}((a_1)_r^{\#})^{n+1}a_2^n.$$
Hence, $$aa_r^d=a_1(a_1)_r^{\#}+\sum\limits_{n=1}^{\infty}a_1((a_1)_r^{\#})^{n+1}a_2^n.$$
Then $$\begin{array}{rll}
(aa_r^d)^*a_2&=&(a_1(a_1)_r^{\#}))^*a_2+\sum\limits_{n=1}^{\infty}[a_1((a_1)_r^{\#})^{n+1}a_2^n]^*a_2\\
&=&[(a_1)_r^{\#}]^*(a_1^*a_2)+\sum\limits_{n=1}^{\infty}[((a_1)_r^{\#})^{n+1}a_2^n]^*(a_1^*a_2)\\
&=&0;
\end{array}$$ hence, $(aa_r^d)^*a_1=(aa_r^d)^*(a_1+a_2)=(aa_r^d)^*a$.
Accordingly, $$(aa_r^d)^*a^2x=(aa_r^d)^*(a_1+a_2)(a_1+a_2)x=(aa_r^d)^*a_1^2x=(aa_r^d)^*a_1=(aa_r^d)^*a.$$

We check that $$\begin{array}{rll}
a^n-axa^n&=&(1-ax)a^n=(1-a_1x)(a_1+a_2)a^{n-1}\\
&=&(1-a_1x)a_2a^{n-1}=(1-a_1x)a_2(a_1+a_2)a^{n-2}\\
&=&(1-a_1x)(a_2)^2a^{n-2}=\cdots =(1-a_1x)(a_2)^n.
\end{array}$$ Hence,
$$||a^n-axa^{n}||^{\frac{1}{n}}\leq ||1-a_1x||^{\frac{1}{n}}||(a_2)^n||^{\frac{1}{n}}.$$ Since $a_2\in \mathcal{A}^{qnil}$, we see that
$\lim\limits_{n\to \infty}||(a_2)^n||^{\frac{1}{n}}=0.$ Hence,
$\lim\limits_{n\to \infty}||a^n-axa^n||^{\frac{1}{n}}=0,$ as required.

$(3)\Rightarrow (2)$ By hypothesis, there exists $x\in \mathcal{A}$ such that $$x=ax^2, (aa_r^d)^*a^2x=(aa_r^d)^*a, \lim\limits_{n\to \infty}||a^n-axa^n||^{\frac{1}{n}}=0.$$
Let $a_1=a^2x$ and $a_2=a-a^2x$. Then we check that
$$\begin{array}{rll}
||a_2a_1||&=&||(a-a^2x)a^2x||=||a^3x-a^2xa^2x||\\
&=&||a^{k+1}x^{k-1}-a^2xa^kx^{k-1}||\\
&=&||a[a^k-axa^k]x^{k-1}||.
\end{array}$$ Then $$\begin{array}{rll}
||a_2a_1||^{\frac{1}{k}}&\leq &||a||^{\frac{1}{k}}||a^k-axa^k||^{\frac{1}{k}}||x^{k-1}||^{\frac{1}{k}}.
\end{array}$$ Therefore $\lim\limits_{k\to \infty}||a_2a_1||^{\frac{1}{k}}=0$, and then $a_2a_1=0$.

Since $x=ax^2, (aa_r^d)^*a^2x=(aa_r^d)^*a$, we deduce that
$$\begin{array}{rl}
&||a_1^*a_2||\\
=&||(a^2x)^*(a-a^2x)||\\
=&||(a^2x)^*a-(a^2x)^*a^2x||\\
=&||(a^2x)^*a-(a^{k}x^{k-1})^*a^2x||\\
=&||(a^2x)^*a-[(a^{k}-aa_r^da^{k})x^{k-1}+aa_r^da^kx^{k-1}]^*a^2x||\\
\leq&||(a^{k}-aa_r^da^k)^*||(x^{k-1})^*||||a^2x||+||(a^2x)^*a-[aa_r^da^kx^{k-1}]^*a^2x||\\
\leq&||(a^{k}-aa_r^da^{k})^*||(x^{k-1})^*||||a^2x||+||(a^2x)^*a-[a^kx^{k-1}]^*(aa_r^d)^*a^2x||\\
=&||(a^{k}-aa_r^da^{k})^*||(x^{k-1})^*||||a^2x||+||(a^2x)^*a-[a^kx^{k-1}]^*(aa_r^d)^*a||\\
\leq&||(a^{k}-aa_r^da^{k})^*||(x^{k-1})^*||||a^2x||+||(a^2x)^*a-[aa_r^da^{k}x^{k-1}]^*a||\\
\leq&||(a^{k}-aa_r^da^{k})^*||(x^{k-1})^*||||a^2x||+||(a^kx^{k-1})^*a-[aa_r^da^{k}x^{k-1}]^*a||\\
\leq&||(a^{k}-aa_r^da^{k})^*||(x^{k-1})^*||||a^2x||+||(a^k-aa_r^da^{k})x^{k-1}]^*||||a||\\
\leq&||(a^{k}-aa_r^da^{k})^*||||(x^{k-1})^*||||a^2x||+||(a^k-aa_r^da^{k})^*||||(x^{k-1})^*||||a||.
\end{array}$$ Then $$\begin{array}{rll}
||a_1^*a_2||^{\frac{1}{k}}&\leq &||(a^{k}-aa_r^da^{k})^*||^{\frac{1}{k}}||(x^{k-1})^*||^{\frac{1}{k}}||a^2x||^{\frac{1}{k}}\\
&+&||(a^k-aa_r^da^{k})^*||^{\frac{1}{k}}||(x^{k-1})^*||^{\frac{1}{k}}||a||^{\frac{1}{k}}.
\end{array}$$ By virtue of Lemma 2.5, $\lim\limits_{k\to \infty}||a_1^*a_2||^{\frac{1}{k}}=0$; hence, $a_1^*a_2=0$.

Since $ax^2=x$, we check that $$\begin{array}{rll}
a_1x&=&a^2x^2=a(ax^2)=ax,\\
a_1xa_1&=&(ax)(a^2x)=axa^2x=a^2x=a_1,\\
a_1x^2&=&(a_1x)x=(ax)x=ax^2=x,\\
(a_1)^2x&=&a_1(a_1x)=(a^2x)(ax)=a^2xax=a(axa)x=a^2x=a_1.
\end{array}$$ By virtue of Lemma 2.2, $a_1\in \mathcal{A}_r^{\#}$.
Clearly, $a=a^2x+(a-a^2x)=a_1+a_2$. Then we have $a^*a^2x=(a_1^*+a_2^*)(a_1+a_2)^2(a_1)_r^{\#}=(a_1^*+a_2^*)a_1=a_1^*a_1+(a_1^*a_2)^*=a_1^*a_1$.
Accordingly, $(a^*a^2x)^*=(a_1^*a_1)^*=a_1^*a_1=a^*a^2x$, as required.

$(2)\Rightarrow (1)$ By hypotheses, we have $z\in \mathcal{A}$ such that $$z=az^2, (a^*a^2z)^*=a^*a^2z, \lim\limits_{n\to \infty}||a^n-aza^{n}||^{\frac{1}{n}}=0.$$
For any $n\in {\Bbb N}$, we have $az=a(az^2)=a^2z^2=a^2(az^2)z=a^3z^3=\cdots =a^nz^n=\cdots =a^{n+1}z^{n+1}$.
Thus, we prove that $$\begin{array}{rll}
||az-azaz||&=&||a^nz^n-az(a^nz^n)||\\
&=&||(a^n-aza^n)z^n||.
\end{array}$$ Hence, $$\begin{array}{rll}
||az-azaz||^{\frac{1}{n}}&\leq &||(a^n-aza^n)||^{\frac{1}{n}}||z||^{\frac{n}{n}}.
\end{array}$$
This implies that $$\lim\limits_{n\to \infty}||az-azaz||^{\frac{1}{n}}=0,$$ whence, $az=(az)^2$.

Set $x=a^2z$ and $y=a-a^2z.$ Then $a=x+y$.
We check that $$\begin{array}{rcl}
(a^2-aza^2)z&=&(a^2-aza^2)a^{n-2}z^{n-1}\\
&=&(a^n-aza^n)z^{n-1}.
\end{array}$$
Therefore $$||(a^2-aza^2)z||^{\frac{1}{n}}\leq ||a^n-aza^{n}|^{\frac{1}{n}}|||z^{n-1}||^{\frac{1}{n}}.$$
Since $$\lim\limits_{n\to \infty}||a^n-aza^n||^{\frac{1}{n}}=0,$$ we derive that $$\lim\limits_{n\to \infty}||(a^2-aza^2)z||^{\frac{1}{n}}=0.$$
Hence, $(a^2-aza^2)z=0$.

We claim that $x$ has right group inverse. Evidently, we verify that
$$\begin{array}{rll}
x^2z&=&a^2za^2z^2=a^2zaz=a(az)^2=a^2z=x,\\
xz^2&=&(a^2z)z^2=(a^2z^2)z=az^2=z,\\
xzx&=&a^2z^2a^2z=aza^2z=a^2z=x.
\end{array}$$ Hence, $x\in \mathcal{A}_r^{\#}$ by Lemma 2.2.

We check that
$$\begin{array}{rll}
||(a-aza)^{n+1}||^{\frac{1}{n+1}}&=&||(a-aza)^{n-1}(a-aza)(a-aza)||^{\frac{1}{n+1}}\\
&=&||(a-aza)^{n-1}(a-aza)a||^{\frac{1}{n+1}}\\
&=&||(a-aza)^{n-2}(a-aza)(a-aza)a||^{\frac{1}{n+1}}\\
&=&||(a-aza)^{n-2}(a-aza)a^2||^{\frac{1}{n+1}}\\
&\vdots&\\
&=&||(a-aza)(a-aza)a^{n-1}||^{\frac{1}{n+1}}\\
&\leq &\big[||a^n-aza^{n}||^{\frac{1}{n}}\big]^{\frac{n}{n+1}}||a-aza||^{\frac{1}{n+1}}.
\end{array}$$ Accordingly, $$\lim\limits_{n\to \infty}||(a-aza)^{n+1}||^{\frac{1}{n+1}}=0.$$ This implies that $a-aza\in \mathcal{A}^{qnil}$. By using Cline's
formula (see~\cite[Theorem 2.2]{L2}), $y=a-a^2z\in \mathcal{A}^{qnil}$. Moreover, we see that $$\begin{array}{rll}
x^*y&=&(a^2z)^*(a-a^2z)=[z^*(a^2)^*a](1-az)\\
&=&(a^*a^2z)^*(1-az)\\
&=&(a^*a^2z)(1-az)=0,\\
yx&=&(a-a^2z)(a^2z)=a^3z-a^2(za^2z)=a^3z-a^2(az)=0,
\end{array}$$ as required.\end{proof}

If the preceding generalized right group decomposition of $a$ exists, we say that $a$ has generalized right group inverse $(a_1)_r^{\tiny\textcircled{\#}}$, and denote it by $a_r^{\tiny\textcircled{g}}$, i.e., $a_r^{\tiny\textcircled{g}}=(a_1)_r^{\tiny\textcircled{\#}}$. Let $\mathcal{A}_r^{\tiny\textcircled{g}}$ denote the sets of all generalized right group invertible elements in $\mathcal{A}$.

\begin{cor} Let $a\in \mathcal{A}$. Then\end{cor}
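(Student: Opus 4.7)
The natural content of Corollary 2.7, given the defining paragraph immediately preceding it, is the well-definedness of $a_r^{\tiny\textcircled{g}}$: whenever $a \in \mathcal{A}_r^{\tiny\textcircled{g}}$, the element $x$ characterized by condition (2) of Theorem 2.6 is uniquely determined by $a$. My plan is to prove this directly using Theorem 2.6 and the structural lemmas 2.2--2.5.

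Suppose $x, y \in \mathcal{A}$ both satisfy the three defining conditions $x = ax^2$, $(a^*a^2x)^* = a^*a^2x$, and $\lim_{n\to\infty}\|a^n - axa^n\|^{1/n} = 0$ (and analogously for $y$). The first step is to show that the idempotents $ax$ and $ay$ coincide. Both are idempotents by the very computation carried out in the proof of $(2) \Rightarrow (1)$. Writing $p := ax$ and $q := ay$, the hermiticity conditions read $(a^*a)p = p^*(a^*a)$ and $(a^*a)q = q^*(a^*a)$, while the quasinilpotent-remainder limit forces $p$ and $q$ to act as asymptotic left identities for $a^n$ in the same sense. The conjunction of these two constraints should select a unique idempotent, yielding $p = q$; here Lemma 2.5 is essential to replace $\|a^n - pa^n\|$ by its adjoint and to couple the hermiticity of $a^*a^2x$ with the asymptotics.

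With $ax = ay$ now fixed, set $a_1 := a^2 x = a\cdot(ax) = a\cdot(ay) = a^2 y$ and $a_2 := a - a_1$. By the proof of $(2)\Rightarrow(1)$ this gives a common generalized right group decomposition $a = a_1 + a_2$ shared by both $x$ and $y$, and both elements serve as right group inverses of the same $a_1$ (Lemma 2.2). A short computation using $x = ax^2$, $xa_1 = x(a^2 x) = (ax)(ax) = ax = ay$, and the symmetric chain for $y$, together with $a_1^*a_2 = 0 = a_2 a_1$, collapses $x$ and $y$ to the common element $xa_1 x = xa_1 y = y a_1 y$, finishing uniqueness.

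The main obstacle is the first step. The quasinilpotent condition alone only pins down the Drazin-type range projection $aa_r^d$, which need not satisfy any self-adjointness property; it is the hermiticity condition $(a^*a^2x)^* = a^*a^2x$ that refines this choice to a single idempotent, paralleling the role the Moore--Penrose equation plays in selecting the core-EP inverse from the generalized Drazin inverse. I would pattern this argument on the corresponding two-sided uniqueness proof for the generalized group inverse in~\cite{C3}, adapted to cope with the absence of the commutation $xa = ax$ in the one-sided setting; the substitute for commutation is the orthogonality $a_1^* a_2 = a_2 a_1 = 0$ inherited from Definition~2.1.
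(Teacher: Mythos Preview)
Your proposal addresses the wrong statement. In the paper's source, the body of Corollary~2.7 sits in an \texttt{enumerate} placed immediately \emph{after} the \texttt{cor} environment, so the fragment you were shown (``Let $a\in\mathcal{A}$. Then'') is only the header. The actual content is:
\begin{enumerate}
\item[(1)] $a\in\mathcal{A}^{\#}$ if and only if $a\in\mathcal{A}_r^{\#}\cap\mathcal{A}^{D}$;
\item[(2)] $a\in\mathcal{A}^{\tiny\textcircled{g}}$ if and only if $a\in\mathcal{A}_r^{\tiny\textcircled{g}}\cap\mathcal{A}^{d}$.
\end{enumerate}
The paper's proof of (1) takes $a\in\mathcal{A}_r^{\#}\cap\mathcal{A}^{D}$, writes $a-a^{D}a^{2}=(a^{n}-a^{D}a^{n+1})ax^{n}$ using $a=a^{n+1}x^{n}$, lets $n\to\infty$ to force $a=a^{D}a^{2}=a^{2}x\in a^{2}\mathcal{A}\cap\mathcal{A}a^{2}$, and concludes $a\in\mathcal{A}^{\#}$. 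Part (2) is dispatched by citing Theorem~2.6 together with \cite[Theorem~3.1]{C3}. None of this concerns uniqueness of $a_r^{\tiny\textcircled{g}}$.

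Your guessed target---uniqueness of the element $x$ in Theorem~2.6(2)---is in fact not claimed anywhere in the paper, and for good reason: one-sided inverses here are generally \emph{not} unique. The paper explicitly treats $\{a_r^{\#}\}$ and $\{a_r^{d}\}$ as sets (see the remarks after Lemmas~2.2 and~2.3), and the final step of your outline, ``both $x$ and $y$ serve as right group inverses of the same $a_1$, hence $x=y$,'' fails precisely because right group inverses need not be unique. So even as a standalone result your proposal has a genuine gap at the decisive step.
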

\begin{enumerate}
\item [(1)] $a\in \mathcal{A}^{\#}$ if and only if $a\in \mathcal{A}_r^{\#}\bigcap \mathcal{A}^D$.
\vspace{-.5mm}
\item [(2)] $a\in \mathcal{A}^{\tiny\textcircled{g}}$ if and only if $a\in \mathcal{A}_r^{\tiny\textcircled{g}}\bigcap \mathcal{A}^d$.
\end{enumerate}
\begin{proof} $(1)$ One direction is obvious. Conversely, assume that $a\in \mathcal{A}_r^{\#}\bigcap \mathcal{A}^D$. Then
there exists $x\in \mathcal{A}$ such that $$ax^2=x=xax, a^2x=axa=a.$$ Then $a-a^Da^2=a^{n+1}x^{n}-a^Da^{n+2}x^n=(a^n-a^Da^{n+1})ax^n.$ Hence,
 $$||a-a^Da^2||^{\frac{1}{n}}\leq ||a^n-a^Da^{n+1}|^{\frac{1}{n}}|||a||^{\frac{1}{n}}|||x||.$$
Since $$\lim\limits_{n\to \infty}||a^n-a^Da^{n+1}||^{\frac{1}{n}}=0,$$ we have
$$\lim\limits_{n\to \infty}||a-a^Da^2||^{\frac{1}{n}}=0.$$
Hence, $a=a^Da^2=a^2x\in a^2\mathcal{A}\bigcap \mathcal{A}a^2$. Therefore $a\in \mathcal{A}^{\#}$.

$(2)$ This is obvious by Theorem 2.6 and~\cite[Theorem 3.1]{C3}.\end{proof}

\begin{cor} Let $a,b\in \mathcal{A}_r^{\tiny\textcircled{g}}$. If $ab=ba=a^*b=0$, then $a+b\in \mathcal{A}_r^{\tiny\textcircled{g}}$. In this case,
$$(a+b)_r^{\tiny\textcircled{g}}=a_r^{\tiny\textcircled{g}}+b_r^{\tiny\textcircled{g}}.$$\end{cor}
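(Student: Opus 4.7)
The plan is to verify directly, via the characterization in Theorem 2.6(2), that $z := x + y$ is the generalized right group inverse of $a + b$, where $x := a_r^{\tiny\textcircled{g}}$ and $y := b_r^{\tiny\textcircled{g}}$; this gives the formula $(a+b)_r^{\tiny\textcircled{g}} = a_r^{\tiny\textcircled{g}} + b_r^{\tiny\textcircled{g}}$.

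I will first record the orthogonalities I expect to use. Since $y = by^2$ and $ab = 0$, we have $ay = (ab)y^2 = 0$, and symmetrically $bx = 0$; also $b^*a = 0$ follows from $a^*b = 0$ by applying $*$. Writing the generalized right group decomposition $a = a_1 + a_2$ from Theorem 2.6(1), with $a_1 = a^2 x$ and $x = (a_1)_r^{\tiny\textcircled{\#}}$, Lemma 2.2 supplies the identities $a_1 = x a_1^2$ and $a_1^2 x = a_1$; these combine to give the commutation $a_1 x = (x a_1^2) x = x(a_1^2 x) = x a_1$. Similarly $b_1 y = y b_1$. Also $a_2 x^2 = (a - a_1) x^2 = x - x = 0$ leads to $a_2 x = a_2(a x^2) = (a_2 a_1 + a_2^2) x^2 = a_2(a_2 x^2) = 0$, and symmetrically $b_2 y = 0$.

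The crux of the proof is the mixed orthogonality $a x b = 0$ (and, symmetrically, $b y a = 0$). The self-adjointness of $a^* a^2 x$ yields $a^* a^2 x = x^*(a^*)^2 a$; multiplying on the right by $b$ and using $ab = 0$ gives $a^* a^2 x b = 0$. Setting $v := a x b$, this reads $a^*(a v) = 0$, whence $(a v)^*(a v) = v^*(a^* a v) = 0$, and proper involution forces $a v = a^2 x b = 0$, i.e.\ $a_1 b = 0$. Combining with $a_1 x = x a_1$ and $a_2 x = 0$ produces $a x b = a_1 x b + a_2 x b = x(a_1 b) + 0 = 0$. An analogous argument, using $b^* a = 0$ and self-adjointness of $b^* b^2 y$, yields $b y a = 0$.

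With these identities in hand, the three conditions of Theorem 2.6(2) reduce to bookkeeping. Expanding $(a+b)(x+y)^2$ into eight monomials, four are killed by $ay = bx = 0$, two yield $a x^2 + b y^2 = x + y$, and the surviving cross terms become $a x y = (a x b) y^2 = 0$ and $b y x = (b y a) x^2 = 0$, so $(a+b) z^2 = z$. For self-adjointness, $(a+b)^2 = a^2 + b^2$ (since $ab = ba = 0$), and $a^* b = b^* a = 0$ together with $ay = bx = 0$ collapse $(a+b)^*(a+b)^2 z$ to $a^* a^2 x + b^* b^2 y$, a sum of two self-adjoint elements. Finally, $(a+b)^n = a^n + b^n$ and $(a+b) z = a x + b y$, combined with $a x b = b y a = 0$, give $(a+b)^n - (a+b) z (a+b)^n = (a^n - a x a^n) + (b^n - b y b^n)$; the quasinilpotent condition then follows from the subadditivity $(s+t)^{1/n} \le s^{1/n} + t^{1/n}$ for $s, t \ge 0$. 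The main obstacle is the derivation of $a x b = 0$: this is where the hypothesis $a^* b = 0$ enters, and where self-adjointness, proper involution, and the commutation $a_1 x = x a_1$ must work in concert.
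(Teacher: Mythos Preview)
Your proof is correct and takes a genuinely different route from the paper's. The paper works with the generalized right group decompositions $a = x+y$ and $b = s+t$ (characterization~(1) of Theorem~2.6): it checks that $x+s \in \mathcal{A}_r^{\#}$ with $(x+s)_r^{\#} = x_r^{\#} + s_r^{\#}$, that $y+t \in \mathcal{A}^{qnil}$ (via $yt = 0$ and the additive quasinilpotency lemma), and that the cross terms $(x+s)^*(y+t)$ and $(y+t)(x+s)$ vanish. The key orthogonality $a^2 a_r^{\tiny\textcircled{g}} b = 0$ is obtained there through the identity $(aa_r^d)^* a^2 a_r^{\tiny\textcircled{g}} = (aa_r^d)^* a$ from characterization~(3), together with proper involution applied to $(aa_r^d)^*(aa_r^d)\,\cdot = 0$. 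You instead verify characterization~(2) directly for $z = a_r^{\tiny\textcircled{g}} + b_r^{\tiny\textcircled{g}}$, and you derive the same orthogonality $a_1 b = a^2 x b = 0$ purely from the self-adjointness of $a^* a^2 x$ and proper involution, without invoking $a_r^d$ at all. Your approach is more elementary in that it sidesteps the generalized right Drazin inverse and the additive quasinilpotency lemma; the paper's approach stays at the level of the decomposition, which is structurally clean but calls on more of the surrounding machinery.

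One minor slip: in the symmetric step for $bya = 0$, the hypothesis you actually use is $ba = 0$ (to annihilate $y^*(b^*)^2(ba)$), not $b^*a = 0$. Since $ba = 0$ is among the assumptions, this does not affect correctness.
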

\begin{proof} In view of Theorem 2.6, we have decompositions:
$$\begin{array}{c}
a=x+y, x^*y=yx=0, x\in \mathcal{A}_r^{\#}, y\in \mathcal{A}^{qnil};\\
b=s+t, s^*t=ts=0, s\in \mathcal{A}_r^{\#}, t\in \mathcal{A}^{qnil}.
\end{array}$$ Explicitly, we have $x=a^2a_r^{\tiny\textcircled{g}}$ and $s=b^2b_r^{\tiny\textcircled{g}}$.
Then $a+b=(x+s)+(y+t)$. We easily verify that
$$\begin{array}{rll}
(x+s)(x_r^{\#}+s_r^{\#})(x+s)&=&x+s,\\
(x_r^{\#}+s_r^{\#})(x+s)^2&=&x+s,\\
(x+s)(x_r^{\#}+s_r^{\#})^2&=&x_r^{\#}+s_r^{\#},\\
(x_r^{\#}+s_r^{\#})(x+s)(x_r^{\#}+s_r^{\#})&=&x_r^{\#}+s_r^{\#}.
\end{array}$$
Then $x+s\in \mathcal{A}_r^{\#}$ and $(x+s)_r^{\#}=x_r^{\#}+s_r^{\#}.$

Obviously, we have $$a^2a_r^{\tiny\textcircled{g}}b=aa_r^da^2a_r^{\tiny\textcircled{g}}b.$$

By hypothesis, we have $(aa_r^d)^*a^2a_r^{\tiny\textcircled{g}}b=(aa_r^d)^*ab=0$; hence, $(aa_r^d)^*aa_r^da^2a_r^{\tiny\textcircled{g}}b=0$. As the involution is proper, we deduce that
$aa_r^da^2a_r^{\tiny\textcircled{g}}b=0$. This implies that $a^2a_r^{\tiny\textcircled{g}}b=0$. Likewise,
$b^2b_r^{\tiny\textcircled{g}})a=0.$
Hence, $yt=(a-a^2a_r^{\tiny\textcircled{g}})(b-b^2b_r^{\tiny\textcircled{g}})=
-a^2a_r^{\tiny\textcircled{g}}b(1-bb_r^{\tiny\textcircled{g}})=
0$, it follows by~\cite[Lemma 2.4]{CK} that $y+t\in \mathcal{A}^{qnil}$.

Obviously, we check that $$\begin{array}{rll}
(x+s)^*(y+t)&=&x^*y+x^*t+s^*y+s^*t=x^*t+s^*y\\
&=&(a_r^{\tiny\textcircled{g}}a)^*(a^*b)(1-bb_r^{\tiny\textcircled{g}})+(b_r^{\tiny\textcircled{g}}b)^*(b^*a)(1-aa_r^{\tiny\textcircled{g}})\\
&=&0,\\
(y+t)(x+s)&=&yx+ys+tx+ts=ys+tx\\
&=&(a-a^2a_r^{\tiny\textcircled{g}})b^2b_r^{\tiny\textcircled{g}}+(b-b^2b_r^{\tiny\textcircled{g}})a^2a_r^{\tiny\textcircled{g}}=0.
\end{array}$$ By using Theorem 2.6, $$\begin{array}{rll}
(a+b)_r^{\tiny\textcircled{g}}&=&(x+s)_r^{\#}\\
&=&x_r^{\#}+s_r^{\#}\\
&=&a_r^{\tiny\textcircled{g}}+b_r^{\tiny\textcircled{g}},
\end{array}$$ as asserted.\end{proof}

As is well known, $a\in \mathcal{A}$ has generalized Drazin inverse if and only if it is quasipolar, i.e., there exists an idempotent $p\in \mathcal{A}$ such that $a+p\in \mathcal{A}^{-1}, pa=ap\in \mathcal{A}^{qnil}$. We come now to characterize generalized right group inverse by the polar-like property.

\begin{thm} Let $a\in \mathcal{A}$. Then the following are equivalent:\end{thm}
\begin{enumerate}
\item [(1)]{\it $a\in \mathcal{A}_r^{\tiny\textcircled{g}}$.}
\item [(2)]{\it There exists an idempotent $p\in \mathcal{A}$ such that $$(1-p)a(1-p)\in [(1-p)\mathcal{A}(1-p)]_r^{-1}, (a^*ap)^*=a^*ap~\mbox{and} ~pa=pap\in \mathcal{A}^{qnil}.$$}
\item [(3)]{\it $a\in \mathcal{A}_r^{d}$ and there exists an idempotent $p\in \mathcal{A}$ such that $$(1-p)a(1-p)\in [(1-p)\mathcal{A}(1-p)]_r^{-1}, (aa_r^d)^*ap=0~\mbox{and} ~pa\in \mathcal{A}^{qnil}.$$}
\end{enumerate}
\begin{proof} $(1)\Rightarrow (2)$ Since $a\in R_r^{\tiny\textcircled{g}}$, there exist $z,y\in \mathcal{A}$ such that $$a=z+y, z^*y=yz=0, z\in \mathcal{A}_r^{\#}, y\in \mathcal{A}^{qnil}.$$ Set $x=z_r^{\#}$. Then we check that $$\begin{array}{rll}
ax&=&(z+y)z_r^{\#}=zz_r^{\#}+(yz)(zz_r^{\#})^2=zz_r^{\#},\\
ax^2&=&(ax)x=z(z_r^{\#})^2=z_r^{\#}=x,\\
a^*a^2x&=&(z+y)^*(z+y)zz_r^{\#}=(z+y)^*(z^2z_r^{\#})=z^*z,\\
(a^*a^2x)^*&=&(z^*z)^*=z^*z=a^*a^2x.
\end{array}$$ Let $p=1-zz_r^{\#}$. Then $p=p^2\in \mathcal{A}$. Furthermore, $ap=a(1-zz_r^{\#})=(z+y)(1-zz_r^{\#})=y\in \mathcal{A}^{qnil}$, and so
$pa=(1-zz_r^{\#})a=a-zz_r^{\#}a\in \mathcal{A}^{qnil}$ by Cline's formula (see~\cite[Theorem 2.2]{L2}).
Thus, $$(a^*ap)^*=(a^*a-a^*a^2x)^*=a^*a-a^*a^2x=a^*ap.$$
Since $pa(1-p)=(1-zz_r^{\#})(z+y)zz_r^{\#}=0$, we get $pa=pap$.
It is easy to verify that $$\begin{array}{rll}
(1-p)a(1-p)=zz_r^{\#}(z+y)zz_r^{\#}=zz_r^{\#}z^2z_r^{\#}=zz_r^{\#}z=z.
\end{array}$$ Obviously, $$[(1-p)a(1-p)][(1-p)z_r^{\#}(1-p)]=1-p.$$
Thus, $$(1-p)a(1-p)\in [(1-p)\mathcal{A}(1-p)]_r^{-1},$$ as required.

$(2)\Rightarrow (1)$ By hypothesis, there exists an idempotent $p\in \mathcal{A}$ such that $$(1-p)a(1-p)\in [(1-p)\mathcal{A}(1-p)]_r^{-1}, (a^*ap)^*=a^*ap~\mbox{and} ~pa=pap\in \mathcal{A}^{qnil}.$$ Set
$x=a(1-p)$ and $y=ap$. Then $a=x+y$. Since $pa=pap\in \mathcal{A}^{qnil}$, by using Cline's formula, we have $y\in \mathcal{A}^{qnil}$.
We also see that $pa(1-p)=0$, and then $yx=apa(1-p)=0$. Moreover, we have $x^*y=[a(1-p)]^*ap=(1-p^*)a^*ap=(1-p^*)(p^*a^*a)=0$.
We claim that $x\in \mathcal{A}_r^{\#}$.

Obviously, $x=a(1-p)=(1-p)a(1-p)$. Let $z=[(1-p)a(1-p)]_r^{-1}$. Then
$$\begin{array}{rll}
x^2z&=&a(1-p)a(1-p)z=a(1-p)=x,\\
xzx&=&(1-p)a(1-p)zx=(1-p)x=x,\\
xz^2&=&[(1-p)a(1-p)z]z=(1-p)z=z,\\
zxz&=&z(xz)=z(1-p)=z.
\end{array}$$ This implies that $x\in \mathcal{A}_r^{\#}$.

Then $a=x+y$ is a generalized right group decomposition of $a$. In light of Theorem 2.6, $a\in \mathcal{A}_r^{\tiny\textcircled{g}}$.

$(1)\Rightarrow (3)$ Clearly, $a\in \mathcal{A}_r^{d}$. By hypothesis, $a$ has the generalized right group decomposition $a=z+y$.
Let $x=z_r^{\#}$ and $p=1-z_r^{\#}$. As in the preceding discussion, we prove that
$$(1-p)a(1-p)\in [(1-p)\mathcal{A}(1-p)]_r^{-1}~\mbox{and} ~pa=pap\in \mathcal{A}^{qnil}.$$
As in the proof in Theorem 2.6, we prove that $(aa_r^d)^*a^2x=(aa_r^d)^*a$.
Therefore $(aa_r^d)^*ap=(aa_r^d)^*a-(aa_r^d)^*a^2x=0$, as desired.

$(3)\Rightarrow (1)$ By hypothesis, we have an idempotent $p\in \mathcal{A}$ such that $$(1-p)a(1-p)\in [(1-p)\mathcal{A}(1-p)]_r^{-1}, (aa_r^d)^*ap=0~\mbox{and} ~pa=pap\in \mathcal{A}^{qnil}.$$
Set $x=a(1-p)$ and $y=ap$. Then $a=x+y$. Analogously to the preceding discussion, we have $yx=0, x\in \mathcal{A}_r^{\#}$ and $y\in \mathcal{A}^{qnil}$.
By virtue of Lemma 2.4, we have
$$(x+y)_r^d=x_r^{\#}[1+\sum\limits_{n=1}^{\infty}(x_r^{\#})^ny^n].$$
Hence, $$\begin{array}{rll}
aa_r^dx&=&(x+y)(x+y)_r^dx\\
&=&(x+y)x_r^{\#}[1+\sum\limits_{n=1}^{\infty}(x_r^{\#})^ny^n]x\\
&=&(x+y)x_r^{\#}x=xx_r^{\#}x+yx_r^{\#}x\\
&=&xx_r^{\#}x+(yx)(x_r^{\#})^2x=x.
\end{array}$$
Thus we verify that $$\begin{array}{rll}
x^*y&=&(aa_r^{\#}x)^*ap\\
&=&x^*(aa_r^{d})^*ap\\
&=&0.
\end{array}$$
Hence $a=x+y$ is a generalized right group decomposition of $a$. According to Theorem 2.6, $a\in \mathcal{A}_r^{\tiny\textcircled{g}}$.
\end{proof}

\begin{cor} Let $a\in \mathcal{A}_r^{\tiny\textcircled{g}}$. Then there exists an idempotent $p\in \mathcal{A}$ such that
$$a+p\in \mathcal{A}_r^{-1}, (a^*ap)^*=a^*ap~\mbox{and} ~ap=pap\in \mathcal{A}^{qnil}.$$\end{cor}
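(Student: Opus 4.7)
The plan is to deduce Corollary~2.10 directly from Theorem~2.9 by upgrading the corner right-invertibility of $(1-p)a(1-p)$ that the theorem supplies to the global statement $a+p\in\mathcal{A}_r^{-1}$. The condition $(a^{*}ap)^{*}=a^{*}ap$ is inherited verbatim, and the identity $ap=pap$ together with $ap\in\mathcal{A}^{qnil}$ comes from the same bookkeeping that produced $pa=pap$ in the proof of Theorem~2.9 (plus Cline's formula for the quasinilpotency); only the right invertibility of $a+p$ needs real work.

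First I apply Theorem~2.9~$(1)\Rightarrow(2)$ to obtain the idempotent $p$ (constructed in that proof as $p=1-zz_r^{\#}$ from the generalized right group decomposition $a=z+y$) together with a right inverse $s\in(1-p)\mathcal{A}(1-p)$ of $A:=(1-p)a(1-p)$, and the relation $pa=pap\in\mathcal{A}^{qnil}$. Setting $B:=(1-p)ap$ and $C:=pap$, the identity $pa(1-p)=0$ yields the upper-triangular presentation $a+p=A+B+(C+p)$. Because $C\in p\mathcal{A}p\cap\mathcal{A}^{qnil}$ with $pC=Cp=C$, the element $1+C$ is invertible in $\mathcal{A}$, and the factorisation $C+p=p(1+C)$ makes $w:=p(1+C)^{-1}p\in p\mathcal{A}p$ a right inverse of $C+p$ in the $p$-corner, i.e.\ $(C+p)w=p$. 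I then propose the Schur-complement-style candidate $u:=s+w-sBw$. A bookkeeping computation, using $ps=sp=0$, $pw=wp=w$, $(1-p)B=B$, $As=1-p$, $(C+p)w=p$, and $pa=pap$, collapses the three summands to $(a+p)s=1-p$, $(a+p)w=Bw+p$, and $(a+p)sBw=Bw$, which telescope to $(a+p)u=1$. Hence $a+p\in\mathcal{A}_r^{-1}$.

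I expect the main obstacle to be this off-diagonal cancellation: the correction term $-sBw$ is pinned down precisely to kill the stray $Bw$ coming from $(a+p)w$, and it only works with the $p$-corner inverse $w=p(1+C)^{-1}p$ rather than the ambient inverse $(1+C)^{-1}$, since using the latter would spoil $p(a+p)w=p$ and leak a non-zero contribution into the $(1-p,p)$-block. Once the right inverse is verified, the remaining assertions $(a^{*}ap)^{*}=a^{*}ap$ and $ap=pap\in\mathcal{A}^{qnil}$ are just restatements from Theorem~2.9, with Cline's formula (cited in the paper as~\cite[Theorem 2.2]{L2}) converting the quasinilpotency of $pa$ to that of $ap$.
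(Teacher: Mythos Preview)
Your Schur-complement construction of a right inverse $u=s+w-sBw$ for $a+p$ is correct and differs from the paper's route: the paper instead factors $a+p=[(1-p)a(1-p)+p](1+pa)$ and argues that each factor is right invertible (the second because $pa\in\mathcal{A}^{qnil}$). Your block verification $(a+p)s=1-p$, $(a+p)w=Bw+p$, $(a+p)sBw=Bw$ is clean and, notably, uses only $pa(1-p)=0$, which is exactly what Theorem~2.9(2) supplies; by contrast the paper's displayed step $(1-p)a+p=(1-p)a(1-p)+p$ actually presupposes the stronger $(1-p)ap=0$.

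That stronger condition is also where your argument has a genuine gap. You assert that $ap=pap$ ``comes from the same bookkeeping that produced $pa=pap$,'' but Theorem~2.9 only establishes $pa=pap$. With $p=1-zz_r^{\#}$ and $ap=y$ as in the proof of Theorem~2.9, the claim $ap=pap$ amounts to $zz_r^{\#}y=0$; the available relations $z^{*}y=0$ and $yz=0$ give $(zz_r^{\#})^{*}y=0$ but not $zz_r^{\#}y=0$, since the idempotent $zz_r^{\#}$ is not self-adjoint in general. So the clause $ap=pap$ in the corollary is left unproved by your argument (the paper itself quotes this condition from Theorem~2.9 even though that theorem states $pa=pap$, so the discrepancy is already present there). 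The quasinilpotency of $ap$ via Cline's formula and the self-adjointness of $a^{*}ap$ are fine.
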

\begin{proof} In view of Theorem 2.9, there exists an idempotent $p\in \mathcal{A}$ such that $$(1-p)a(1-p)\in [(1-p)\mathcal{A}(1-p)]_r^{-1}, (a^*ap)^*=a^*ap~\mbox{and} ~ap=pap\in \mathcal{A}^{qnil}.$$ Write $(1-p)a(1-p)(1-p)t(1-p)=1-p$ for a $t\in \mathcal{A}$.
Then $$\begin{array}{rll}
a+p&=&[(1-p)a+pa]+p=[(1-p)a(1-p)+p]+pa\\
&=&[(1-p)a(1-p)+p][1+((1-p)t(1-p)+p)pa]\\
&=&[(1-p)a(1-p)+p](1+pa).
\end{array}$$ Since $ap\in \mathcal{A}^{qnil}$, by using Cline's formula (see~\cite[Theorem 2.2]{L2}), $1+pa\in \mathcal{A}^{qnil}$.
Therefore $a+p\in \mathcal{A}_r^{-1}$, as asserted.\end{proof}

The following example illustrates that generalized right group inverse maybe different
from the generalized group inverse.

\begin{exam} Let $V$ be a countably generated infinite-dimensional vector
space over the complex field ${\Bbb C}$, and let $\{ x_1,x_2,x_3,\cdots \}$
be a basis of $V$. Let $\sigma: V\to V$ be a right shift
operator given by $\sigma(x_{i})=x_{i+1}$
for all $i\in \mathbb{N}$. Then $\sigma$ has generalized right group inverse, while it has no
generalized group inverse.\end{exam}
\begin{proof} Let $\tau: V\to V$ be a left shift
operator given by $\tau(x_1)=0, \tau(x_{i+1})=x_{i}$
for all $i\in \mathbb{N}$. Then $\sigma\tau=1_V$.
The right (left) shift operator can be regarded as an element in a Banach *-algebra of complex matrix, with conjugate transpose $*$ as the involution.
We verify that $$\sigma\tau^2=\tau=\tau\sigma\tau, \sigma^2\tau=\sigma\tau\sigma=\sigma.$$
Then $\sigma$ has right group inverse, and therefore it has generalized right group inverse. We claim that it has no generalized
group inverse.

Assume that $\sigma$ has generalized group inverse. It follows by~\cite[Theorem 2.2]{C3} that it has generalized Drazin inverse. Then there exists an operator $\eta$ such that
$$\sigma\eta=\eta\sigma, \eta=\eta^2\sigma, \sigma-\eta\sigma^2~\mbox{is quasinilpotent}.$$
This implies that $$\lim\limits_{n\to \infty}||(\sigma-\eta\sigma^2)^n||^{\frac{1}{n}}=0.$$
Since $\eta\sigma=\sigma\eta$ is an idempotent, we verify that
$$\begin{array}{rll}
||\sigma-\eta\sigma^2||^{\frac{1}{n}}&=&||(\sigma^n-\eta\sigma^{n+1})\tau^{n-1}||^{\frac{1}{n}}\\
&=&||(1-\eta\sigma)\sigma^{n}\tau^{n-1}||^{\frac{1}{n}}\\
&=&||(1-\eta\sigma)^n\sigma^{n}\tau^{n-1}||^{\frac{1}{n}}\\
&=&||(\sigma-\eta\sigma^2)^n\tau^{n-1}||^{\frac{1}{n}}\\
&\leq&||(\sigma-\eta\sigma^2)^n||^{\frac{1}{n}}||\tau||^{1-\frac{1}{n}}.
\end{array}$$ This implies that $$\lim\limits_{n\to \infty}||\sigma-\eta\sigma^2||^{\frac{1}{n}}=0.$$
Hence $\sigma=\eta\sigma^2$. We infer that $1_V=\sigma\tau=\eta\sigma^2\tau=\eta\sigma$.
According $\sigma$ is invertible, and then $\tau$ is invertible. Since all eigenvalues of $\tau$ are zero, it is not invertible.
This gives a contradiction. Therefore $\sigma$ has no generalized group inverse, as asserted.\end{proof}

\section{equivalent characterizations and representations}

In this section, we focus on the diverse characterizations of the generalized right group inverse. The representations of related generalized right group inverse are presented. Our discussion begins with the following point.

\begin{thm} Let $a\in \mathcal{A}$. Then $a\in \mathcal{A}_r^{\tiny\textcircled{g}}$ if and only if\end{thm}
\begin{enumerate}
\item [(1)] $a\in \mathcal{A}_r^d$;
\vspace{-.5mm}
\item [(2)] There exists an idempotent $q\in \mathcal{A}$ such that $$(aa_r^d)\mathcal{A}=q\mathcal{A}~\mbox{and} ~a^*aq=q^*a^*a.$$
\end{enumerate}
In this case, $a_r^{\tiny\textcircled{g}}=a_r^dq.$
\begin{proof} $\Longrightarrow $ In view of Theorem 2.6, $a\in \mathcal{A}_r^d$. Let $q=aa_r^{\tiny\textcircled{g}}$. Then $q^2=[aa_r^{\tiny\textcircled{g}}a]a_r^{\tiny\textcircled{g}}=aa_r^{\tiny\textcircled{g}}=q\in \mathcal{A}$, i.e., $q\in \mathcal{A}$ is an idempotent.
We check that $$a^*aq=a^*a^2a_r^{\tiny\textcircled{g}}=(a^*a^2a_r^{\tiny\textcircled{g}})^*=(a^*aq)^*=q^*a^*a.$$

Observing that $$\begin{array}{rll}
||aa_r^{\tiny\textcircled{g}}-aa_r^daa_r^{\tiny\textcircled{g}}||^{\frac{1}{n}}&=&
||a^n(a_r^{\tiny\textcircled{g}})^{n}-aa_r^da^n(a_r^{\tiny\textcircled{g}})^{n}||^{\frac{1}{n}}\\
&\leq &||a^n-aa_r^da^n||^{\frac{1}{n}}||a_r^{\tiny\textcircled{g}}||.
\end{array}$$
Since $$\lim\limits_{n\to \infty}||a^n-a^na_r^da^n||^{\frac{1}{n}}=0,$$ we derive
$$\lim\limits_{n\to \infty}||aa_r^{\tiny\textcircled{g}}-aa_r^daa_r^{\tiny\textcircled{g}}||^{\frac{1}{n}}=0.$$ Then $aa_r^{\tiny\textcircled{g}}=aa_r^daa_r^{\tiny\textcircled{g}}.$

Likewise, we verify that $aa_r^d=aa_r^{\tiny\textcircled{g}}aa_r^d.$
Therefore $q\mathcal{A}=aa_r^d\mathcal{A}$, as required.

$\Longleftarrow $ By hypothesis, there exists an idempotent $q\in \mathcal{A}$ such that $aa_r^d\mathcal{A}=q\mathcal{A}$ and $a^*aq=q^*a^*a.$
Set $x=a_r^dq$. Then $ax=aa_r^dq=q$, and so $$(a^*a^2x)^*=(a^*aax)^*=(a^*aq)^*=q^*a^*a=a^*aq=(a^*a)(ax)=a^*a^2x.$$ Moreover, we have
$$ax^2=(ax)x=qx=q(a_r^dq)=q(aa_r^d)a_r^dq=(aa_r^d)a_r^dq=a_r^dq=x.$$ We verify that
$$\begin{array}{rll}
||a^n-axa^n||&=&||[a^n-a(a_r^dq)aa_r^da^n]-[ax(a^n-aa_r^da^{n})]||\\
&\leq &||a^n-aa_r^da^{n}||+||ax||||a^{n}-aa_r^da^{n}||\\
&\leq&\big(1+||ax||\big)||a^n-aa_r^da^{n}||.
\end{array}$$ Since $$\lim\limits_{n\to \infty}||a^n-aa_r^da^n||^{\frac{1}{n}}=0,$$ we derive
$$\lim\limits_{n\to \infty}||a^n-axa^{n}||^{\frac{1}{n}}=0.$$ Therefore
$a\in \mathcal{A}_r^{\tiny\textcircled{g}}$ and
$a_r^{\tiny\textcircled{g}}=x=a_r^dq.$\end{proof}

\begin{cor} Let $a\in \mathcal{A}$. Then $a\in \mathcal{A}_r^{\tiny\textcircled{g}}$ if and only if\end{cor}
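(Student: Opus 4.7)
The plan is to derive this corollary from Theorem~3.1 by re-expressing the existence of the idempotent $q$ appearing there as the solvability of a single linear equation in $\mathcal{A}$. Concretely, the paired conditions ``$(aa_r^d)\mathcal{A}=q\mathcal{A}$ and $a^*aq=q^*a^*a$'' should be shown equivalent to the requirement that $(aa_r^d)^*(aa_r^d)x=(aa_r^d)^*a$ admits a solution $x\in\mathcal{A}$, with corresponding formula $a_r^{\tiny\textcircled{g}}=(a_r^d)^2aa_r^dx$ matching $a_r^{\tiny\textcircled{g}}=a_r^dq$ of Theorem~3.1.

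For the forward implication, I would take $q=aa_r^{\tiny\textcircled{g}}$, which by the proof of Theorem~3.1 is idempotent with $aa_r^d\mathcal{A}=q\mathcal{A}$ and $a^*aq=q^*a^*a$. Since $q\in aa_r^d\mathcal{A}$, we can write $q=aa_r^ds$ for some $s\in\mathcal{A}$; I would then choose $x$ (essentially $s$, or $a_r^{\tiny\textcircled{g}}a$, depending on how the two sides of the target identity match) so that $aa_r^d\cdot x=q$ holds up to the kernel of $(aa_r^d)^*$. Left-multiplying the hermiticity $a^*aq=q^*a^*a$ by $(a_r^d)^*$, and using the identities $aa_r^d\cdot q=q$ and $q\cdot aa_r^d=aa_r^d$ established in the proof of Theorem~3.1, should then produce the identity $(aa_r^d)^*(aa_r^d)x=(aa_r^d)^*a$.

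For the reverse implication, given such an $x$, the claim is that $z:=(a_r^d)^2aa_r^dx$ is the generalized right group inverse of $a$. I would verify the three clauses of Theorem~2.6(2) directly: the identity $z=az^2$ follows from $a(a_r^d)^2=a_r^d$ and $a_r^daa_r^d=a_r^d$ (Lemma~2.3), together with idempotence of $aa_r^d$; the quasinilpotent clause $\lim\limits_{n\to\infty}||a^n-aza^n||^{1/n}=0$ reduces, after rewriting $az=a_r^dx$, to the known decay $\lim\limits_{n\to\infty}||a^n-aa_r^da^n||^{1/n}=0$ guaranteed by $a\in\mathcal{A}_r^d$; and the hermiticity $(a^*a^2z)^*=a^*a^2z$ is where the hypothesis $(aa_r^d)^*(aa_r^d)x=(aa_r^d)^*a$ is used, since $a^*a^2z$ collapses via $a(a_r^d)^2=a_r^d$ to an expression of the form $a^*\cdot(aa_r^d)\cdot x$, whose self-adjointness is exactly what the hypothesis encodes. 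An alternative route is to set $q:=aa_r^dx$, modify it if necessary so that it is idempotent with $q\mathcal{A}=(aa_r^d)\mathcal{A}$, verify the hermiticity condition in Theorem~3.1, and then read off $a_r^{\tiny\textcircled{g}}=a_r^dq=(a_r^d)^2aa_r^dx$.

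The main obstacle I anticipate is precisely the hermiticity step in the reverse direction: upgrading the one-sided adjoint identity $(aa_r^d)^*(aa_r^d)x=(aa_r^d)^*a$ to the full symmetry $(a^*a^2z)^*=a^*a^2z$ will rest on properness of the involution (used throughout the paper, as in the proof of Corollary~2.8) together with the algebraic identities $a(a_r^d)^2=a_r^d$, $a_r^daa_r^d=a_r^d$ and $a^2a_r^d=aa_r^da$ from Lemma~2.3. Once this step is secured, the explicit formula $a_r^{\tiny\textcircled{g}}=(a_r^d)^2aa_r^dx$ should fall out routinely from the matching formula $a_r^{\tiny\textcircled{g}}=a_r^dq$ of Theorem~3.1.
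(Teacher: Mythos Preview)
Your overall plan---verify the three clauses of Theorem~2.6(2) for $z=(a_r^d)^2aa_r^dx$---is reasonable, and in fact the hermiticity step you flag as the ``main obstacle'' is the one that goes through cleanly: from $(aa_r^d)^*(aa_r^d)x=(aa_r^d)^*a$ one gets $a^*aa_r^dx = x^*(aa_r^d)^*a = (a^*aa_r^dx)^*$ by applying the hypothesis and its adjoint, and $a^*a^2z=a^*aa_r^dx$.

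The genuine gap is elsewhere. Neither $az^2=z$ nor the quasinilpotent clause follows from the identities $a(a_r^d)^2=a_r^d$, $a_r^daa_r^d=a_r^d$, and idempotence of $aa_r^d$ alone. Both rest on the properness-derived identities
\[
aa_r^d = aa_r^dxa_r^d \quad\text{and}\quad aa_r^dxaa_r^d = aa_r^da,
\]
which the paper extracts first (from $(aa_r^d)^*(aa_r^d)x=(aa_r^d)^*a$ by right-multiplying by $a_r^d$, resp.\ $aa_r^d$, and using properness). Without the first, the computation of $az^2$ stalls at $a_r^d(aa_r^dxa_r^d)a_r^daa_r^dx$. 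More seriously, your claim that the quasinilpotent clause ``reduces, after rewriting $az=a_r^dx$, to the known decay $\lim\|a^n-aa_r^da^n\|^{1/n}=0$'' does not hold: one only gets $az\cdot aa_r^d = a_r^da$, not $aa_r^d$, so $a^n-aza^n$ picks up a term $a^n-a_r^da^{n+1}$ which has no reason to be small (for right Drazin inverses $a_r^da^2$ need not equal $a$ or $aa_r^da$). The paper does not attempt this reduction; instead it shows $a-a^2z\in\mathcal{A}^{qnil}$ by writing $a-a^2z=(a-aa_r^da)+aa_r^d(x-a)$, checking the cross-product vanishes, and invoking Cline's formula on the second summand. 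It then assembles the full generalized right group decomposition $a=a^2z+(a-a^2z)$ and appeals to Theorem~2.6(1), not~2.6(2).

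For the forward direction the paper also does not go through Theorem~3.1: it takes the $y$ from Theorem~2.6, sets $x=a^2y$, and checks $(aa_r^d)^*aa_r^dx=(aa_r^d)^*a$ via the limit $\lim\|a^k-aa_r^da^k\|^{1/k}=0$. Your route via $q=aa_r^{\tiny\textcircled{g}}$ would work too, but it is not what the paper does.
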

\begin{enumerate}
\item [(1)] $a\in \mathcal{A}_r^d$;
\vspace{-.5mm}
\item [(2)] There exists an idempotent $q\in \mathcal{A}$ such that $\ell(aa_r^d)=\ell(q)$ and $a^*aq=q^*a^*a.$
\end{enumerate}
In this case, $a_r^{\tiny\textcircled{g}}=a_r^dq.$
\begin{proof}  $\Longrightarrow $ In view of Theorem 3.1, $a\in \mathcal{A}_r^d$ and there exists an idempotent $q\in \mathcal{A}$ such that $$aa_r^d\mathcal{A}=q\mathcal{A}~\mbox{and} ~a^*aq=q^*a^*a.$$ We infer that $\ell(aa_r^d)=\ell(q)$, as desired.

$\Longleftarrow $ By hypothesis, there exists an idempotent $q\in \mathcal{A}$ such that $\ell(aa_r^d)=\ell(q)$ and $a^*aq=q^*a^*a.$
Clearly, $1-aa_r^d\in \ell(aa_r^d)\subseteq \ell(q)$; hence, $q=aa_r^dq$. This implies that $q\mathcal{A}\subseteq aa_r^d\mathcal{A}.$
Also we have $1-q\in \ell(q)\subseteq \ell(aa_r^d)$, and then $aa_r^d=qaa_r^d$. We infer that $aa_r^d\mathcal{A}\subseteq q\mathcal{A}$.
Therefore $aa_r^d\mathcal{A}=q\mathcal{A}$. By virtue of Theorem 3.1, $a\in \mathcal{A}_r^{\tiny\textcircled{g}}$. In this case,
$a_r^{\tiny\textcircled{g}}=a_r^dq.$\end{proof}

\begin{thm} Let $a\in \mathcal{A}$. Then $a\in \mathcal{A}_r^{\tiny\textcircled{g}}$ if and only if\end{thm}
\begin{enumerate}
\item [(1)] $a\in \mathcal{A}_r^d$;
\vspace{-.5mm}
\item [(2)] There exist $x\in \mathcal{A}$ such that $(aa_r^d)^*(aa_r^d)x=(aa_r^d)^*a.$
\end{enumerate}
In this case, $a_r^{\tiny\textcircled{g}}=(a_r^d)^2x.$
\begin{proof} $\Longrightarrow $ By virtue of Theorem 2.6, $a\in \mathcal{A}_r^{d}$ and there exists a $y\in \mathcal{A}$ such that $$y=ay^2, (aa_r^d)^*a^2y=(aa_r^d)^*a, \lim\limits_{n\to \infty}||a^n-aya^{n}||^{\frac{1}{n}}=0.$$

Choose $x=a^2y$. Then we verify that
$$\begin{array}{rl}
&||(aa_r^d)^*a-(aa_r^d)^*aa_r^dx||\\
=&||(aa_r^d)^*a-(aa_r^d)^*aa_r^da^2y||\\
&||(aa_r^d)^*a-(aa_r^d)^*aa_r^da^{k}y^{k-1}||\\
=&||[(aa_r^d)^*a-(aa_r^d)^*a^ky^{k-1}]+(aa_r^d)^*[a^k-aa_r^da^{k}]y^{k-1}||\\
=&||[(aa_r^d)^*a-(aa_r^d)^*a^2y]+(aa_r^d)^*[a^k-aa_r^da^{k}]y^{k-1}||\\
=&||(aa_r^d)^*[a^k-aa_r^da^{k}]y^{k-1}||\\
\leq&||(aa^d)^*||||a^k-aa_r^da^{k}||||y^{k-1}||.
\end{array}$$ Since $\lim\limits_{k\to \infty}||a^k-aa_r^da^{k}||^{\frac{1}{k}}=0$, we derive that
$\lim\limits_{k\to \infty}||(aa_r^d)^*a-(aa_r^d)^*aa_r^dx||^{\frac{1}{k}}=0.$ Hence $(aa_r^d)^*aa_r^dx=(aa_r^d)^*a,$ as required.

$\Longleftarrow $ By hypothesis, there exist $x\in \mathcal{A}$ such that $(aa_r^d)^*aa_r^dx=(aa_r^d)^*a$.
Then $(aa_r^d)^*aa_r^d=[(aa_r^d)^*a]a_r^d=[(aa_r^d)^*aa_r^dx]a_r^d=(aa_r^d)^*aa_r^dxa_r^d.$
As the involution $*$ is proper, we have
$aa_r^d=aa_r^dxa_r^d$. Choose $z=(a_r^d)^2x$. Then we verify that
$$\begin{array}{rll}
az^2&=&a[(a_r^d)^2aa_r^dx][(a_r^d)^2aa_r^dx]\\
&=&[a(a_r^d)^2][aa_r^dxa_r^d]a_r^daa_r^dx\\
&=&(a_r^d)^2aa_r^dx=z,\\
(aa_r^d)^*a^2z&=&(aa_r^d)^*a^2(a_r^d)^2aa_r^dx=(aa_r^d)^*aa_r^dx=(aa_r^d)^*a.\\
\end{array}$$

Obviously, $az=a(a_r^d)^2x=a_r^daa_r^dx$; and then $$(az)^2=a_r^d[aa_r^dxa_r^d]aa_r^dx=a_r^daa_r^daa_r^dx=a_r^daa_r^dx=az.$$

Since $(aa_r^d)^*aa_r^dx=(aa_r^d)^*a$, we deduce that $$(aa_r^d)^*aa_r^dxaa_r^d=(aa_r^d)^*(a^2a_r^d)=[(aa_r^d)^*aa_r^da].$$ As the involution is proper, we
have $aa_r^dxaa_r^d=aa_r^da=a^2a_r^d$.

Set $x=a^2z$ and $y=a-a^2z.$ Then $a=x+y$.

Obviously, $$a-a^2z=a-aa_r^dx=(a-aa_r^da)+aa_r^d(x-a).$$

We easily check that $$\begin{array}{rll}
[a-aa_r^da][aa_r^d(x-a)]&=&[a-a^2a_r^d][aa_r^d(x-a)]\\
&=&a[(1-aa_r^d)(aa_r^d)](x-a)\\
&=&0.
\end{array}$$

$$aa_r^d(x-a)aa_r^d=aa_r^dxaa_r^d-aa_r^da^2a_r^d=aa_r^dxaa_r^d-a^3(a_r^d)^2=0$$

By using Cline's formula, we have $(x-a)aa_r^d=(x-a)[aa_r^d]^2\in \mathcal{A}^{qnil}.$
Hence, $a-a^2z\in \mathcal{A}^{qnil}.$

We claim that $x$ has right group inverse. Evidently, we verify that
$$\begin{array}{rll}
x^2z&=&a^2za^2z^2=a^2zaz=a(az)^2=a^2z=x,\\
xz^2&=&(a^2z)z^2=(a^2z^2)z=az^2=z,\\
zxz&=&z(a^2z)z=za(az^2)=zaz=(a_r^d)^2aa_r^dx[a(a_r^d)^2]aa_r^dx\\
&=&(a_r^d)^2xaa_r^dx=z,\\
xzx&=&a^2z^2a^2z=aza^2z=a^3z^2=a^2z=x.
\end{array}$$ Hence, $x\in \mathcal{A}_r^{\#}.$

We directly check that $$\begin{array}{rll}
yx&=&(a-a^2z)(a^2z)=[a-a^2(a_r^d)^2aa_r^dx]a^2(a_r^d)^2aa_r^dx\\
&=&a^2a_r^dx-[aa_r^dxaa_rd]x\\
&=&a^2a_r^dx-a^2a_r^dx=0.
\end{array}$$ Since
$$az^2=z, (aa_r^d)^*a^2z=(aa_r^d)^*a,$$ as in the proof of Theorem 2.6,
we verify that $x^*y=0.$ In view of Theorem 2.6, $a\in \mathcal{A}_r^{\tiny\textcircled{g}}$. In this case,
$a_r^{\tiny\textcircled{g}}=x_r^{\#}=z=(a_r^d)^2aa_r^dx.$ This completes the proof.\end{proof}

\begin{cor} Let $a\in \mathcal{A}$. Then the following are equivalent:\end{cor}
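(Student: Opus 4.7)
The plan is to derive this corollary directly from Theorem 3.3, treating that theorem as the central hub that all the listed equivalent characterizations feed into. Since every formulation in this section ultimately rests on the existence of an $x\in \mathcal{A}$ satisfying $(aa_r^d)^*(aa_r^d)x=(aa_r^d)^*a$ alongside $a\in \mathcal{A}_r^d$, the work reduces to translating whatever dual or reformulated equation appears in each item of the enumeration back into this canonical form.

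First I would fix the notation $p=aa_r^d$ (an idempotent), so that Theorem 3.3's condition becomes $p^*px=p^*a$. Each listed equivalent condition is then checked separately: for the forward implication, the explicit formula $a_r^{\tiny\textcircled{g}}=(a_r^d)^2 aa_r^d x$ from Theorem 3.3 together with the identities $ap=a^2a_r^d$ and $pa_r^d=a_r^d$ gives a direct construction of the required element, and the fact that $a\in \mathcal{A}_r^{\tiny\textcircled{g}}$ already implies $a\in \mathcal{A}_r^d$ (via the observation used in the proof of Theorem 3.1) settles the Drazin-type prerequisite. For each reverse implication, the goal is to recover a solution of $p^*px=p^*a$ from the postulated condition and then invoke Theorem 3.3.

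The main obstacle I anticipate is in the reverse direction, specifically in converting an algebraic solvability statement (say one phrased in terms of $a^*a$, or of a range/annihilator condition parallel to Corollary 3.2) into the precise $p^*p$-form of Theorem 3.3. The key tool here is the properness of the involution: as in the $\Longleftarrow$ direction of Theorem 3.3, one derives $aa_r^d=aa_r^d x a_r^d$ from a $*$-equation by cancelling $(aa_r^d)^*$ via properness, then multiplies by appropriate powers of $a_r^d$ to extract the canonical equation. Cline's formula (as cited in the proof of Lemma 2.4 and elsewhere) will handle the quasinilpotent transfer whenever we need to pass between $aa_r^d$ and $a_r^d a$ type expressions.

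Finally, I would assemble the cycle of implications through Theorem 3.3 as the common vertex, record the representation $a_r^{\tiny\textcircled{g}}=(a_r^d)^2 aa_r^d x$ uniformly, and note that in each individual reformulation the associated $x$ can be chosen in a form consistent with the original $x$ in that item, so the explicit representation persists across the equivalences.
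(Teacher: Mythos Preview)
Your proposal is a generic meta-strategy rather than a proof: you never identify what the enumerated items~(2) and~(3) of Corollary~3.4 actually say, so there is no concrete argument to evaluate. The corollary's conditions are that $a\in\mathcal{A}_r^d$ and there exists an idempotent $p$ (resp.\ $q$) with $p\in aa_r^d\mathcal{A}$ (resp.\ $aa_r^d\mathcal{A}=q\mathcal{A}$) and $(aa_r^d)^*ap=(aa_r^d)^*a$. Your plan to ``fix the notation $p=aa_r^d$'' collides with the paper's use of $p$ as a \emph{variable} idempotent, and the tools you flag (properness of $*$, Cline's formula) are not invoked in this corollary's proof at all.

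That said, your instinct to route everything through Theorem~3.3 is partly right: the paper's $(2)\Rightarrow(1)$ does end by producing an $x$ with $(aa_r^d)^*(aa_r^d)x=(aa_r^d)^*a$ and citing Theorem~3.3. The step you are missing is the concrete construction: from an idempotent $p\in aa_r^d\mathcal{A}$ satisfying $(aa_r^d)^*ap=(aa_r^d)^*a$, the paper builds $q=p+(1-p)aa_r^d$, checks $q^2=q$ and $q\mathcal{A}=aa_r^d\mathcal{A}$, writes $q=aa_r^dz$, and takes $x=az$. (An equivalent shortcut: write $p=aa_r^dw$ directly and set $x=aw$, using $a^2a_r^d=aa_r^da$.) For $(1)\Rightarrow(3)$ the paper does \emph{not} go through Theorem~3.3 but rather takes $q=aa_r^{\tiny\textcircled{g}}$ via Theorem~3.1 and verifies $(aa_r^d)^*aq=(aa_r^d)^*a$ by a short computation using $qaa_r^d=aa_r^d$ and $a^*aq=q^*a^*a$. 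Your sketch gives no hint of either of these constructions, and without them the cycle of implications does not close.
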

\begin{enumerate}
\item [(1)] $a\in \mathcal{A}_r^{\tiny\textcircled{g}}$.
\vspace{-.5mm}
\item [(2)] $a\in \mathcal{A}_r^d$ and there exists an idempotent $p\in \mathcal{A}$ $$p\in aa_r^d\mathcal{A}~\mbox{and} ~(aa_r^d)^*ap=(aa_r^d)^*a.$$
\item [(3)] $a\in \mathcal{A}_r^d$ and there exists an idempotent $q\in \mathcal{A}$ $$aa_r^d\mathcal{A}=q\mathcal{A}~\mbox{and} ~(aa_r^d)^*aq=(aa_r^d)^*a.$$
\end{enumerate}
In this case, $a_r^{\tiny\textcircled{g}}=aa_r^dq$.
\begin{proof} $(1)\Rightarrow (3)$ In view of Theorem 3.3, $a\in \mathcal{A}_r^{d}$ and there exists an idempotent $q\in \mathcal{A}$ such that $$(aa_r^d)\mathcal{A}=q\mathcal{A}~\mbox{and} ~a^*aq=q^*a^*a.$$ Explicitly, $q=aa_r^{\tiny\textcircled{g}}$. We directly check that
$$\begin{array}{rll}
qaa_r^d&=&aa_r^{\tiny\textcircled{g}}aa_r^d=aa_r^d,\\
aa_r^dq&=&aa_r^daa_r^{\tiny\textcircled{g}}=q.
\end{array}$$ Then we verify that
$$\begin{array}{rll}
(aa_r^d)^*aq&=&(a_r^d)^*(a^*aq)=(a_r^d)^*(q^*a^*a)\\
&=&[aqa_r^d]^*a=[a(qaa_r^d)(a_r^d)]^*a=[a(aa_r^d)(a_r^d)]^*a=(aa_r^d)^*a.
\end{array}$$ Accordingly, $(aa_r^d)^*aq=(aa_r^d)^*a,$ as desired.

$(3)\Rightarrow (2)$ This is trivial.

$(2)\Rightarrow (1)$ By hypothesis, there exists an idempotent $p\in \mathcal{A}$ such that $$p\in aa_r^d\mathcal{A}~\mbox{and} ~(aa_r^d)^*ap=(aa_r^d)^*a.$$
Set $q=p+(1-p)aa_r^d$. Then we verify that
$$\begin{array}{rll}
q^2&=&[p+(1-p)aa_r^d][p+(1-p)aa_r^d]\\
&=&p+(1-p)aa_r^d(1-p)aa_r^d=q,\\
q&=&aa_r^d+p(1-aa_r^d)\in aa_r^d\mathcal{A},\\
aa_r^d&=&qaa_r^d\in q\mathcal{A},\\
aa_r^d\mathcal{A}&=&q\mathcal{A},\\
(aa_r^d)^*aq&=&(aa_r^d)^*a[aa_r^d+p(1-aa_r^d)]\\
&=&(aa_r^d)^*a^2a_r^d+(aa_r^d)^*ap(1-aa_r^d)\\
&=&(aa_r^d)^*a^2a_r^d+(aa_r^d)^*a(1-aa_r^d)=(aa_r^d)^*a.
\end{array}$$ Write $q=aa_r^dz$ with $z\in \mathcal{A}$. Choose $x=az$. Then $(aa_r^d)^*aa_r^dx= (aa_r^d)^*(aa_r^da)z$ $=(aa_r^d)^*a^2a_r^dz=(aa_r^d)^*aq=(aa_r^d)^*a$. This completes the proof by Theorem 3.3.\end{proof}

Next, we investigate the representation of the generalized right group inverse for an anti-triangular matrix.

\begin{thm} Let $\alpha=\left(
\begin{array}{cc}
a&b\\
0&c
\end{array}
\right)$ with $a,c\in \mathcal{A}_r^{\tiny\textcircled{g}}$. If $(1-aa_r^{\tiny\textcircled{g}})b=0$ and $b(1-cc_r^{\tiny\textcircled{g}})=0$, then
$\alpha\in M_2(\mathcal{A})_r^{\tiny\textcircled{g}}$ and $$\alpha_r^{\tiny\textcircled{g}}=\left(
\begin{array}{cc}
a_r^{\tiny\textcircled{g}}&-a_r^{\tiny\textcircled{g}}bc_r^{\tiny\textcircled{g}}\\
0&c_r^{\tiny\textcircled{g}}
\end{array}
\right).$$\end{thm}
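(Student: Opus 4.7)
The plan is to invoke the decomposition characterization Theorem 2.6(1) by assembling a generalized right group decomposition of $\alpha$ from those of $a$ and $c$. Let $a=a_1+a_2$ and $c=c_1+c_2$ with $a_1=a^2a_r^{\tiny\textcircled{g}}$, $c_1=c^2c_r^{\tiny\textcircled{g}}$, $a_2=a-a_1$, $c_2=c-c_1$, and set $p=aa_r^{\tiny\textcircled{g}}$, $q=cc_r^{\tiny\textcircled{g}}$. I would then propose
$$\alpha_1=\left(\begin{array}{cc} a_1 & b \\ 0 & c_1 \end{array}\right), \qquad \alpha_2=\left(\begin{array}{cc} a_2 & 0 \\ 0 & c_2 \end{array}\right),$$
and aim to verify $\alpha_1^*\alpha_2=0=\alpha_2\alpha_1$, $\alpha_1\in M_2(\mathcal{A})_r^{\#}$, and $\alpha_2\in M_2(\mathcal{A})^{qnil}$; Theorem 2.6 will then yield $\alpha\in M_2(\mathcal{A})_r^{\tiny\textcircled{g}}$ with $\alpha_r^{\tiny\textcircled{g}}=(\alpha_1)_r^{\#}$.

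The routine parts go as follows. Block-diagonality of $\alpha_2$ with quasinilpotent entries gives $\alpha_2\in M_2(\mathcal{A})^{qnil}$ immediately. In $\alpha_2\alpha_1$ and $\alpha_1^*\alpha_2$, the diagonal entries vanish by $a_2a_1=c_2c_1=0$ and $a_1^*a_2=c_1^*c_2=0$ from the underlying decompositions; the only nontrivial off-diagonals are $a_2b$ and $b^*a_2$. Since $a_2=a(1-p)$, the hypothesis $(1-p)b=0$ gives $a_2b=0$ at once. For $\alpha_1\in M_2(\mathcal{A})_r^{\#}$ I would propose the candidate
$$Y=\left(\begin{array}{cc} (a_1)_r^{\#} & -(a_1)_r^{\#}b(c_1)_r^{\#} \\ 0 & (c_1)_r^{\#} \end{array}\right)$$
and verify Lemma 2.2 directly. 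The computation hinges on $\alpha_1 Y=\mathrm{diag}(p,q)$, whose off-diagonal entry is $(1-p)b(c_1)_r^{\#}=0$; the remaining identities reduce to the standard right-group-inverse relations $pa_1=a_1=a_1p$, $qc_1=c_1=c_1q$ combined with $pb=b$ and $bq=b$.

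The main obstacle is verifying $b^*a_2=0$. Here my plan is to exploit the self-adjointness $(a^*a^2a_r^{\tiny\textcircled{g}})^*=a^*a^2a_r^{\tiny\textcircled{g}}$, i.e., $a^*ap=p^*a^*a$. Multiplying this identity on the right by $a_r^{\tiny\textcircled{g}}$ and using $a(a_r^{\tiny\textcircled{g}})^2=a_r^{\tiny\textcircled{g}}$ together with $aa_r^{\tiny\textcircled{g}}=p$ should collapse both sides to yield the key relation
$$a^*p=p^*a^*p.$$
Combined with $b=pb$, this gives $a^*b=a^*pb=p^*a^*pb=p^*a^*b$, hence $a_2^*b=(1-p^*)a^*b=0$; taking the involution delivers $b^*a_2=0$.

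Finally, since $a_r^{\tiny\textcircled{g}}=(a_1)_r^{\#}$ and $c_r^{\tiny\textcircled{g}}=(c_1)_r^{\#}$ by the very definition of the generalized right group inverse, $(\alpha_1)_r^{\#}=Y$ matches the formula claimed in the statement.
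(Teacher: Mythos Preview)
Your proof is correct and takes a genuinely different route from the paper's. The paper verifies condition~(2) of Theorem~2.6 directly for the proposed $x$: it computes $\alpha x=\mathrm{diag}(aa_r^{\tiny\textcircled{g}},cc_r^{\tiny\textcircled{g}})$, checks $\alpha x^2=x$, writes out $\alpha^*\alpha^2x$ as a Hermitian block matrix, and then shows $\alpha^n-\alpha x\alpha^n=\mathrm{diag}((1-aa_r^{\tiny\textcircled{g}})a^n,(1-cc_r^{\tiny\textcircled{g}})c^n)$ to handle the limit. You instead lift the generalized right group decompositions of $a$ and $c$ to one of $\alpha$ and invoke condition~(1) of Theorem~2.6; this makes the quasinilpotent part and the right-group-inverse part immediate block computations, and isolates the only real obstruction as the single identity $b^*a_2=0$.

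It is worth noting that the paper's computation silently relies on exactly the same identity: in its displayed matrix for $\alpha^*\alpha^2x$ the $(2,1)$-entry is actually $b^*a^2a_r^{\tiny\textcircled{g}}=b^*a_1$, and writing it as $b^*a$ (so that the matrix is visibly Hermitian) presupposes $b^*a_2=0$, which the paper does not justify. Your derivation of $a^*p=p^*a^*p$ from $a^*ap=p^*a^*a$ is valid; an even shorter route is to observe that $b=pb$ with $p=a_1(a_1)_r^{\#}$ gives $b^*a_2=b^*p^*a_2=b^*((a_1)_r^{\#})^*(a_1^*a_2)=0$ directly from the decomposition property $a_1^*a_2=0$. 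Either way, your argument fills a step the paper leaves implicit, and the decomposition approach makes the structure of the result more transparent.
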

\begin{proof} Set $x=\left(
\begin{array}{cc}
a_r^{\tiny\textcircled{g}}&-a_r^{\tiny\textcircled{g}}bc_r^{\tiny\textcircled{g}}\\
0&c_r^{\tiny\textcircled{g}}
\end{array}
\right).$ Then we verify that
$$\begin{array}{rll}
\alpha x&=&\left(
\begin{array}{cc}
a&b\\
0&c
\end{array}
\right)\left(
\begin{array}{cc}
a_r^{\tiny\textcircled{g}}&-a_r^{\tiny\textcircled{g}}bc_r^{\tiny\textcircled{g}}\\
0&c_r^{\tiny\textcircled{g}}
\end{array}
\right)\\
&=&\left(
\begin{array}{cc}
aa_r^{\tiny\textcircled{g}}&(1-aa_r^{\tiny\textcircled{g}})bc_r^{\tiny\textcircled{g}}\\
0&cc_r^{\tiny\textcircled{g}}
\end{array}
\right)=\left(
\begin{array}{cc}
aa_r^{\tiny\textcircled{g}}&0\\
0&cc_r^{\tiny\textcircled{g}}
\end{array}
\right),\\
\alpha x^2&=&\left(
\begin{array}{cc}
aa_r^{\tiny\textcircled{g}}&0\\
0&cc_r^{\tiny\textcircled{g}}
\end{array}
\right)\left(
\begin{array}{cc}
a_r^{\tiny\textcircled{g}}&-a_r^{\tiny\textcircled{g}}bc_r^{\tiny\textcircled{g}}\\
0&c_r^{\tiny\textcircled{g}}
\end{array}
\right)=x,\\
\alpha^*\alpha^2x&=&\left(
\begin{array}{cc}
a^*&0\\
b^*&c^*
\end{array}
\right)\left(
\begin{array}{cc}
a&b\\
0&c
\end{array}
\right)\left(
\begin{array}{cc}
aa_r^{\tiny\textcircled{g}}&0\\
0&cc_r^{\tiny\textcircled{g}}
\end{array}
\right)\\
&=&\left(
\begin{array}{cc}
a^*&0\\
b^*&c^*
\end{array}
\right)\left(
\begin{array}{cc}
a^2a_r^{\tiny\textcircled{g}}&bcc_r^{\tiny\textcircled{g}}\\
0&c^2c_r^{\tiny\textcircled{g}}
\end{array}
\right)\\
&=&\left(
\begin{array}{cc}
a^*a^2a_r^{\tiny\textcircled{g}}&a^*b\\
b^*a&b^*b+c^*c^2c_r^{\tiny\textcircled{g}}
\end{array}
\right).
\end{array}$$ This implies that $(\alpha^*\alpha^2x)^*=\alpha^*\alpha^2x$.

Obviously, we have $||a^3a_r^{\tiny\textcircled{g}}-aa_r^{\tiny\textcircled{g}})a^3a_r^{\tiny\textcircled{g}}||^{\frac{1}{n}}
\leq ||a^n-aa_r^{\tiny\textcircled{g}})a^n||^{\frac{1}{n}}||a_r^{\tiny\textcircled{g}}||^{1-\frac{2}{n}}.$ Since $\lim\limits_{n\to \infty}||a^n-aa_r^{\tiny\textcircled{g}})a^n||^{\frac{1}{n}}=0,$ we deduce that
$\lim\limits_{n\to \infty}||a^3a_r^{\tiny\textcircled{g}}-aa_r^{\tiny\textcircled{g}})a^3a_r^{\tiny\textcircled{g}}||^{\frac{1}{n}}=0.$
This implies that $a^3a_r^{\tiny\textcircled{g}}=aa_r^{\tiny\textcircled{g}})a^3a_r^{\tiny\textcircled{g}}$.
By hypothesis, we have $b=aa_r^{\tiny\textcircled{g}}b$, and so   $(1-aa_r^{\tiny\textcircled{g}})ab=(1-aa_r^{\tiny\textcircled{g}})a^3a_r^{\tiny\textcircled{g}}b=0$. Thus
we verify that $$\begin{array}{rll}
\alpha^n-\alpha x\alpha^n&=&(I_2-\alpha x)\alpha^n\\
&=&\left(
\begin{array}{cc}
1-aa_r^{\tiny\textcircled{g}}&0\\
0&1-cc_r^{\tiny\textcircled{g}}
\end{array}
\right)\left(
\begin{array}{cc}
a&b\\
0&c
\end{array}
\right)\alpha^{n-1}\\
&=&\left(
\begin{array}{cc}
(1-aa_r^{\tiny\textcircled{g}})a&0\\
0&(1-cc_r^{\tiny\textcircled{g}})c
\end{array}
\right)\alpha^{n-1}\\
&=&\left(
\begin{array}{cc}
(1-aa_r^{\tiny\textcircled{g}})a^2&(1-aa_r^{\tiny\textcircled{g}})ab\\
0&(1-cc_r^{\tiny\textcircled{g}})c^2
\end{array}
\right)\alpha^{n-2}\\
&=&\left(
\begin{array}{cc}
(1-aa_r^{\tiny\textcircled{g}})a^2&0\\
0&(1-cc_r^{\tiny\textcircled{g}})c^2
\end{array}
\right)\alpha^{n-2}\\
&\vdots&\\
&=&\left(
\begin{array}{cc}
(1-aa_r^{\tiny\textcircled{g}})a^n&0\\
0&(1-cc_r^{\tiny\textcircled{g}})c^n
\end{array}
\right).
\end{array}$$ Since $\lim\limits_{n\to \infty}||a^n-aa_r^da^{n}||^{\frac{1}{n}}=\lim\limits_{n\to \infty}||c^n-cc_r^dc^{n}||^{\frac{1}{n}}=0$, we deduce that
$\lim\limits_{n\to \infty}||\alpha^n-\alpha x\alpha^n||^{\frac{1}{n}}=0.$ Therefore
$$\alpha_r^{\tiny\textcircled{g}}=x=\left(
\begin{array}{cc}
a_r^{\tiny\textcircled{g}}&-a_r^{\tiny\textcircled{g}}bc_r^{\tiny\textcircled{g}}\\
0&c_r^{\tiny\textcircled{g}}
\end{array}
\right),$$ as asserted.\end{proof}

\begin{cor} Let $\alpha=\left(
\begin{array}{cc}
a&b\\
0&c
\end{array}
\right)$ with $a,c\in \mathcal{A}_r^{\tiny\textcircled{g}}$. If $(1-aa_r^d)b=0$ and $b(1-cc_r^{\tiny\textcircled{g}})=0$, then
$\alpha\in M_2(\mathcal{A})_r^{\tiny\textcircled{g}}$ and $$\alpha_r^{\tiny\textcircled{g}}=\left(
\begin{array}{cc}
a_r^{\tiny\textcircled{g}}&-a_r^{\tiny\textcircled{g}}bc_r^{\tiny\textcircled{g}}\\
0&c_r^{\tiny\textcircled{g}}
\end{array}
\right).$$\end{cor}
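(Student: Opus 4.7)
The plan is to reduce Corollary 3.6 to Theorem 3.5 by upgrading the hypothesis $(1-aa_r^d)b=0$ to the apparently stronger condition $(1-aa_r^{\tiny\textcircled{g}})b=0$. Once this upgrade is in place, Theorem 3.5 applies verbatim, yielding both $\alpha\in M_2(\mathcal{A})_r^{\tiny\textcircled{g}}$ and the claimed formula for $\alpha_r^{\tiny\textcircled{g}}$ with no further work.

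The key ingredient is the identity $aa_r^{\tiny\textcircled{g}}\cdot aa_r^d=aa_r^d$, which is extracted from the proof of Theorem 3.1: there it is shown that $q:=aa_r^{\tiny\textcircled{g}}$ is an idempotent with $aa_r^d\mathcal{A}=q\mathcal{A}$, so writing $aa_r^d=qu$ for some $u\in\mathcal{A}$ and using $q^2=q$ gives $qaa_r^d=q^2u=qu=aa_r^d$. Applying this to $b=aa_r^db$ (which is the given hypothesis rewritten) then yields
$$aa_r^{\tiny\textcircled{g}}b=aa_r^{\tiny\textcircled{g}}(aa_r^db)=(aa_r^{\tiny\textcircled{g}}aa_r^d)b=aa_r^db=b,$$
so $(1-aa_r^{\tiny\textcircled{g}})b=0$ as required.

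Combined with the unchanged second hypothesis $b(1-cc_r^{\tiny\textcircled{g}})=0$, this is precisely the setup of Theorem 3.5, whose conclusion transfers word for word. I do not anticipate any substantive obstacle: the whole argument is a one-line reduction, and the only point requiring care is to ensure that the auxiliary identity $aa_r^{\tiny\textcircled{g}}aa_r^d=aa_r^d$ is properly credited to — or, if the referee prefers, re-derived inline from — the proof of Theorem 3.1.
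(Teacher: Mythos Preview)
Your proposal is correct and mirrors the paper's own proof almost exactly: both arguments upgrade $(1-aa_r^d)b=0$ to $(1-aa_r^{\tiny\textcircled{g}})b=0$ via the identity $aa_r^{\tiny\textcircled{g}}\,aa_r^d=aa_r^d$ (established in the proof of Theorem~3.1), and then invoke Theorem~3.5 directly. The only cosmetic difference is that you extract the identity from the equality $aa_r^d\mathcal{A}=q\mathcal{A}$ of right ideals, whereas the paper quotes it as already verified; substantively the two proofs are the same.
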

\begin{proof} Since $(1-aa_r^d)b=0$, we have that $b=aa_r^db$. Then
$aa_r^{\tiny\textcircled{g}}b=aa_r^{\tiny\textcircled{g}}aa_r^db=aa_r^{\tiny\textcircled{g}}aa_r^db=b$.
Therefore $(1-aa_r^{\tiny\textcircled{g}})b=0$. Therefore we obtain the result by Theorem 3.5.\end{proof}

\begin{cor} Let $\alpha=\left(
\begin{array}{cc}
a&0\\
b&c
\end{array}
\right)$ with $a,c\in \mathcal{A}_r^{\tiny\textcircled{g}}$. If $(1-cc^{\tiny\textcircled{g}})b=0$ and $b(1-aa^{\tiny\textcircled{g}})=0$, then
$\alpha\in M_2(\mathcal{A})_r^{\tiny\textcircled{g}}$ and $$\alpha_r^{\tiny\textcircled{g}}=\left(
\begin{array}{cc}
a_r^{\tiny\textcircled{g}}&0\\
-c_r^{\tiny\textcircled{g}}ba_r^{\tiny\textcircled{g}}&c_r^{\tiny\textcircled{g}}
\end{array}
\right).$$\end{cor}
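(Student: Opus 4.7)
\medskip

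My plan is to reduce Corollary 3.7 directly to Theorem 3.5 via conjugation by the self-adjoint unitary
$$P = \begin{pmatrix} 0 & 1 \\ 1 & 0 \end{pmatrix} \in M_2(\mathcal{A}).$$
With the conjugate-transpose involution on $M_2(\mathcal{A})$ one has $P^* = P = P^{-1}$, so $P$ is both unitary and self-adjoint. A direct block multiplication gives
$$\beta := P\alpha P = \begin{pmatrix} c & b \\ 0 & a \end{pmatrix},$$
which is precisely an upper-triangular matrix in the format of Theorem 3.5, but with the roles of $a$ and $c$ swapped. The hypotheses $(1 - cc_r^{\tiny\textcircled{g}})b = 0$ and $b(1 - aa_r^{\tiny\textcircled{g}}) = 0$ are exactly those required by Theorem 3.5 after this relabeling, so that theorem yields
$$\beta_r^{\tiny\textcircled{g}} = \begin{pmatrix} c_r^{\tiny\textcircled{g}} & -c_r^{\tiny\textcircled{g}}b a_r^{\tiny\textcircled{g}} \\ 0 & a_r^{\tiny\textcircled{g}} \end{pmatrix}.$$

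Next I would record as a small auxiliary fact that conjugation by a self-adjoint unitary preserves the generalized right group inverse: if $u^* = u = u^{-1}$ in some Banach *-algebra $\mathcal{B}$ and $y \in \mathcal{B}_r^{\tiny\textcircled{g}}$, then $uyu \in \mathcal{B}_r^{\tiny\textcircled{g}}$ with $(uyu)_r^{\tiny\textcircled{g}} = u y_r^{\tiny\textcircled{g}} u$. This is a one-line check of the three defining relations in Theorem 2.6(2) applied to $z = uyu$ and $x = u y_r^{\tiny\textcircled{g}} u$: the $u$'s pair up to give $zx^2 = u y(y_r^{\tiny\textcircled{g}})^2 u = u y_r^{\tiny\textcircled{g}} u = x$; the element $z^*z^2 x = u(y^* y^2 y_r^{\tiny\textcircled{g}}) u$ is self-adjoint because $u^*=u$ and $y^*y^2 y_r^{\tiny\textcircled{g}}$ is already self-adjoint; and $z^n - z x z^n = u(y^n - y y_r^{\tiny\textcircled{g}} y^n)u$, whose norm is bounded by $\|u\|^2 \|y^n - y y_r^{\tiny\textcircled{g}} y^n\|$, yielding the required quasinilpotent limit.

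Applying this observation to $u = P$ and $y = \beta$, I conclude $\alpha = P\beta P \in M_2(\mathcal{A})_r^{\tiny\textcircled{g}}$ with $\alpha_r^{\tiny\textcircled{g}} = P \beta_r^{\tiny\textcircled{g}} P$. A final $2\times 2$ block computation
$$P\beta_r^{\tiny\textcircled{g}}P = \begin{pmatrix} a_r^{\tiny\textcircled{g}} & 0 \\ -c_r^{\tiny\textcircled{g}}b a_r^{\tiny\textcircled{g}} & c_r^{\tiny\textcircled{g}} \end{pmatrix}$$
produces the stated formula. There is no genuine obstacle: the single useful insight is that $P$ is simultaneously unitary and self-adjoint, so the similarity by $P$ commutes with the involution and therefore transports both the self-adjoint condition $(a^*a^2x)^* = a^*a^2x$ and the quasinilpotency condition intact. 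An alternative is to mimic the direct block calculation of Theorem 3.5 using the dual identities $c^3 c_r^{\tiny\textcircled{g}} = cc_r^{\tiny\textcircled{g}} c^3 c_r^{\tiny\textcircled{g}}$ and $(1-cc_r^{\tiny\textcircled{g}})cb = 0$, but this would only duplicate that proof's work, whereas the conjugation argument above immediately recycles Theorem 3.5.
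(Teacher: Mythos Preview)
Your argument is correct and is essentially identical to the paper's own proof: both conjugate by the self-adjoint unitary $P=\left(\begin{smallmatrix}0&1\\1&0\end{smallmatrix}\right)$ to reduce the lower-triangular $\alpha$ to the upper-triangular $\beta=\left(\begin{smallmatrix}c&b\\0&a\end{smallmatrix}\right)$ of Theorem~3.5, and then transport $\beta_r^{\tiny\textcircled{g}}$ back via $P$. Your version is in fact slightly more careful, since you explicitly verify that conjugation by a self-adjoint unitary preserves the three defining conditions of Theorem~2.6(2), a step the paper leaves implicit.
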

\begin{proof} By virtue of Theorem 3.5, $\beta:=\left(
\begin{array}{cc}
c&b\\
0&a
\end{array}
\right)\in M_2(\mathcal{A})_r^{\tiny\textcircled{g}}$ and $$\beta_r^{\tiny\textcircled{g}}=\left(
\begin{array}{cc}
c_r^{\tiny\textcircled{g}}&-c_r^{\tiny\textcircled{g}}ba_r^{\tiny\textcircled{g}}\\
0&a_r^{\tiny\textcircled{g}}
\end{array}
\right).$$ We directly check that $$\alpha=\left(
\begin{array}{cc}
0&1\\
1&0
\end{array}
\right)\beta \left(
\begin{array}{cc}
0&1\\
1&0
\end{array}
\right), \left(
\begin{array}{cc}
0&1\\
1&0
\end{array}
\right)^2=I_2, \left(
\begin{array}{cc}
0&1\\
1&0
\end{array}
\right)^*=\left(
\begin{array}{cc}
0&1\\
1&0
\end{array}
\right).$$ Therefore $$\alpha_r^{\tiny\textcircled{g}}=\left(
\begin{array}{cc}
0&1\\
1&0
\end{array}
\right)\beta\left(
\begin{array}{cc}
0&1\\
1&0
\end{array}
\right)=\left(
\begin{array}{cc}
a_r^{\tiny\textcircled{g}}&0\\
-c_r^{\tiny\textcircled{g}}ba_r^{\tiny\textcircled{g}}&c_r^{\tiny\textcircled{g}}
\end{array}
\right),$$ as asserted.\end{proof}

\section{relations with generalized right core-EP inverses}

In this section we investigate relations between generalized right group inverse and generalized right core-EP inverses.
We use $\mathcal{A}^{(1,3)}$ to denote the set of all $(1,3)$-invertible elements in $\mathcal{A}$.

\begin{lem} Let $a\in \mathcal{A}_r^{\#}\bigcap \mathcal{A}^{(1,3)}$. Then $a\in \mathcal{A}_r^{\tiny\textcircled{\#}}$ and
$a_r^{\tiny\textcircled{\#}}=a_r^{\#}aa^{(1,3)}.$\end{lem}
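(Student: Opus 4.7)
The plan is to exhibit $x := a_r^{\#}aa^{(1,3)}$ as the required right core inverse, i.e., to verify that it satisfies the three defining equations
\[
ax^2=x,\qquad (ax)^*=ax,\qquad axa=a.
\]
The whole argument is driven by first simplifying $ax$ and then reading off the three conditions from that single identity plus one small absorption step.

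The two toolkits are as follows. From Lemma~2.2 the right group inverse gives $aa_r^{\#}a=a$, $a^2a_r^{\#}=a$ and $a(a_r^{\#})^2=a_r^{\#}$, while the $(1,3)$-inverse gives $aa^{(1,3)}a=a$ and $(aa^{(1,3)})^*=aa^{(1,3)}$. Applying $aa_r^{\#}a=a$ collapses the middle of $x$ to obtain
\[
ax \;=\; a\cdot a_r^{\#}a\cdot a^{(1,3)} \;=\; aa^{(1,3)}.
\]
From this, $(ax)^*=(aa^{(1,3)})^*=aa^{(1,3)}=ax$ is immediate, and post-multiplying by $a$ yields $axa=aa^{(1,3)}a=a$. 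Two of the three conditions fall out at once.

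The third condition $ax^2=x$ is the only step that needs a genuine manipulation, and it rests on the absorption identity
\[
aa^{(1,3)}a_r^{\#} \;=\; aa^{(1,3)}\cdot a(a_r^{\#})^2 \;=\; (aa^{(1,3)}a)(a_r^{\#})^2 \;=\; a(a_r^{\#})^2 \;=\; a_r^{\#},
\]
obtained by inserting a factor of $a$ via $a_r^{\#}=a(a_r^{\#})^2$ and then cancelling with $aa^{(1,3)}a=a$. Feeding this back gives $ax^2=(ax)x=aa^{(1,3)}\cdot a_r^{\#}aa^{(1,3)}=a_r^{\#}aa^{(1,3)}=x$, completing the verification.

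The only nontrivial step is spotting the absorption trick $aa^{(1,3)}a_r^{\#}=a_r^{\#}$; everything else is a direct application of the defining relations. Once the three equations hold, $x$ qualifies as a right core inverse of $a$, and thus $a\in\mathcal{A}_r^{\tiny\textcircled{\#}}$ with $a_r^{\tiny\textcircled{\#}}=a_r^{\#}aa^{(1,3)}$.
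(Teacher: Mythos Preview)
Your proof is correct and follows essentially the same approach as the paper's proof. Both arguments first simplify $ax$ to $aa^{(1,3)}$ via $aa_r^{\#}a=a$, immediately read off $(ax)^*=ax$ and $axa=a$, and then handle $ax^2=x$ by the same insertion trick $a_r^{\#}=a(a_r^{\#})^2$ combined with $aa^{(1,3)}a=a$; your version just isolates the absorption identity $aa^{(1,3)}a_r^{\#}=a_r^{\#}$ explicitly, while the paper performs the identical manipulation inline.
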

\begin{proof} Let $x=a_r^{\#}$. Then we have
$$ax^2=x=xax, a^2x=axa=a.$$
Set $z=xaa^{(1,3)}$. Then we verify that
$$\begin{array}{rll}
az&=&(axa)a^{(1,3)}=aa^{(1,3)},\\
az^2&=&aa^{(1,3)}xaa^{(1,3)}=aa^{(1,3)}(ax^2)aa^{(1,3)}\\
&=&(aa^{(1,3)}a)x^2aa^{(1,3)}=(ax^2)aa^{(1,3)}=z,\\
(az)^*&=&(aa^{(1,3)})^*=aa^{(1,3)}=az,\\
aza&=&aa^{(1,3)}a=a,
\end{array}$$ thus yielding the result.\end{proof}

\begin{thm} Let $a\in \mathcal{A}$. Then the following are equivalent:\end{thm}
\begin{enumerate}
\item [(1)] $a\in \mathcal{A}_r^{\tiny\textcircled{d}}$.
\vspace{-.5mm}
\item [(2)] $a\in \mathcal{A}_r^{\tiny\textcircled{g}}$ and $a^2a_r^{\tiny\textcircled{g}}\in \mathcal{A}^{(1,3)}$.
\end{enumerate}
In this case, $a_r^{\tiny\textcircled{g}}=(a_r^d)^2aa_r^{\tiny\textcircled{d}}a$ and $a_r^{\tiny\textcircled{d}}=a_r^{\tiny\textcircled{g}}a(a^2a_r^{\tiny\textcircled{g}})^{(1,3)}.$
\begin{proof} $(1)\Rightarrow (2)$ By hypothesis,  we have $$a_r^{\tiny\textcircled{d}}=a(a_r^{\tiny\textcircled{d}})^2, (aa_r^{\tiny\textcircled{d}})^*=aa_r^{\tiny\textcircled{d}}~\mbox{and} ~\lim\limits_{n\to \infty}||a^n-aa_r^{\tiny\textcircled{d}}a^{n}||^{\frac{1}{n}}=0.$$
Set $x=aa_r^{\tiny\textcircled{d}}a$. Then we check that
 $$\begin{array}{rll}
 (aa_r^d)^*(aa_r^d)x&=&(aa_r^d)^*(aa_r^d)aa_r^{\tiny\textcircled{d}}a\\
 &=&(aa_r^d)^*aa_r^{\tiny\textcircled{d}}a\\
 &=&(aa_r^d)^*(aa_r^{\tiny\textcircled{d}})^*a\\
 &=&(aa_r^{\tiny\textcircled{d}}aa_r^d)^*a\\
 &=&(aa_r^d)^*a.
 \end{array}$$ By virtue of Theorem 3.3, we have $$\begin{array}{rll}
 a_r^{\tiny\textcircled{g}}&=&(a_r^d)^2aa_r^dx=(a_r^d)^2aa_r^daa_r^{\tiny\textcircled{d}}a\\
 &=&(a_r^d)^2aa_r^{\tiny\textcircled{d}}a.
 \end{array}$$

Moreover, we check that $$\begin{array}{rll}
[aa_r^{\tiny\textcircled{d}}a]a_r^{\tiny\textcircled{d}}&=&aa_r^{\tiny\textcircled{d}},\\
((aa_r^{\tiny\textcircled{d}}a)a_r^{\tiny\textcircled{d}})^*&=&(aa_r^{\tiny\textcircled{d}}a)a_r^{\tiny\textcircled{d}},\\
(aa_r^{\tiny\textcircled{d}}a)a_r^{\tiny\textcircled{d}}(aa_r^{\tiny\textcircled{d}}a)&=&aa_r^{\tiny\textcircled{d}}a.
\end{array}$$ Therefore $a^2a_r^{\tiny\textcircled{g}}=a^2(a_r^{\tiny\textcircled{d}})^2a=aa_r^{\tiny\textcircled{d}}a\in \mathcal{A}^{(1,3)}.$

$(2)\Rightarrow (1)$ By hypothesis, there exist $x,y\in \mathcal{A}$ such that $$a=x+y, x^*y=yx=0, x\in
\mathcal{A}_r^{\#}, y\in \mathcal{A}^{qnil}$$ and $a_r^{\tiny\textcircled{g}}=x_r^{\#}$. As in the proof Theorem 2.6, we see that
$x=a^2a_r^{\tiny\textcircled{d}}$; hence, $x\in \mathcal{A}^{(1,3)}$. In view of Lemma 4.1, $x\in \mathcal{A}_r^{\tiny\textcircled{\#}}$. Therefore
$a\in \mathcal{A}_r^{\tiny\textcircled{d}}$ by~\cite[Theorem 2.1]{C4}. By using Lemma 4.1 again, we have
$a_r^{\tiny\textcircled{d}}=x_r^{\tiny\textcircled{\#}}=x_r^{\#}xx^{(1,3)}=a_r^{\tiny\textcircled{g}}a(a^2a_r^{\tiny\textcircled{g}})^{(1,3)},$ as asserted.\end{proof}

\begin{cor} Let $a\in \mathcal{A}_r^{\tiny\textcircled{d}}$. Then $$a_r^{\tiny\textcircled{g}}=[a_r^daa_r^{\tiny\textcircled{d}}]^2a.$$\end{cor}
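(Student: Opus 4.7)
The plan is to begin from Theorem 4.2's representation $a_r^{\tiny\textcircled{g}} = (a_r^d)^2 a a_r^{\tiny\textcircled{d}} a$ and reshape the right-hand side into $[a_r^d a a_r^{\tiny\textcircled{d}}]^2 a$. Two auxiliary identities are the workhorses. First, the defining relation $a_r^d = a(a_r^d)^2$ yields $a(a_r^d)^2 = a_r^d$, so multiplying Theorem 4.2's formula on the left by $a$ gives $a\, a_r^{\tiny\textcircled{g}} = a_r^d a a_r^{\tiny\textcircled{d}} a$. Second, a short induction from $a_r^d = a(a_r^d)^2$ establishes $a a_r^d = a^n (a_r^d)^n$ for every $n \geq 1$; combined with the core-EP limit $\|a^n - a a_r^{\tiny\textcircled{d}} a^n\|^{1/n} \to 0$ this forces $(1 - a a_r^{\tiny\textcircled{d}})\, a a_r^d = 0$, hence $a a_r^d = a a_r^{\tiny\textcircled{d}} \cdot a a_r^d$.

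Invoking Theorem 3.1 applied to $q = a a_r^{\tiny\textcircled{g}}$ one has $a a_r^{\tiny\textcircled{g}} = a a_r^d \cdot a a_r^{\tiny\textcircled{g}}$; stacking this against the previous identity produces the crucial absorption
\[
a a_r^{\tiny\textcircled{d}} \cdot a a_r^{\tiny\textcircled{g}} = a a_r^{\tiny\textcircled{d}} (a a_r^d \cdot a a_r^{\tiny\textcircled{g}}) = (a a_r^{\tiny\textcircled{d}} \cdot a a_r^d)\, a a_r^{\tiny\textcircled{g}} = a a_r^d \cdot a a_r^{\tiny\textcircled{g}} = a a_r^{\tiny\textcircled{g}}.
\]

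With these tools the main computation is short: $[a_r^d a a_r^{\tiny\textcircled{d}}]^2 a = a_r^d a a_r^{\tiny\textcircled{d}} \cdot (a_r^d a a_r^{\tiny\textcircled{d}} a) = a_r^d \cdot a a_r^{\tiny\textcircled{d}} \cdot (a\, a_r^{\tiny\textcircled{g}}) = a_r^d (a a_r^{\tiny\textcircled{d}} \cdot a a_r^{\tiny\textcircled{g}}) = a_r^d \cdot a a_r^{\tiny\textcircled{g}} = (a_r^d a)\, a_r^{\tiny\textcircled{g}} = a_r^{\tiny\textcircled{g}}$. The last equality uses $a_r^d a \cdot a_r^{\tiny\textcircled{g}} = a_r^d a (a_r^d)^2 a a_r^{\tiny\textcircled{d}} a = (a_r^d)^2 a a_r^{\tiny\textcircled{d}} a = a_r^{\tiny\textcircled{g}}$, again via $a(a_r^d)^2 = a_r^d$, which reflects that $a_r^{\tiny\textcircled{g}} \in a_r^d \mathcal{A}$ and $a_r^d a$ is the identity on $a_r^d \mathcal{A}$ (Lemma 2.3).

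The main obstacle is the absorption identity $a a_r^{\tiny\textcircled{d}} \cdot a a_r^{\tiny\textcircled{g}} = a a_r^{\tiny\textcircled{g}}$: it is what couples the self-adjoint core-EP projection $a a_r^{\tiny\textcircled{d}}$ to the (non-self-adjoint) generalized right group idempotent $a a_r^{\tiny\textcircled{g}}$, and producing it requires the power formula $a a_r^d = a^n(a_r^d)^n$ together with the quasinilpotent limit from the right core-EP inverse, followed by the Theorem 3.1 transfer from $a a_r^d$ to $a a_r^{\tiny\textcircled{g}}$. Once that step is in place, everything else is routine bookkeeping driven by $a(a_r^d)^2 = a_r^d$.
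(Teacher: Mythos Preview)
Your argument is correct, and at bottom it rests on the same key identity the paper uses, namely $aa_r^{\tiny\textcircled{d}}\,a_r^d = a_r^d$ (equivalently $aa_r^{\tiny\textcircled{d}}\,aa_r^d = aa_r^d$), which you derive properly from $aa_r^d = a^n(a_r^d)^n$ and the core-EP limit. The paper, however, applies this identity more directly: starting from Theorem~4.2's formula $a_r^{\tiny\textcircled{g}} = (a_r^d)^2 a a_r^{\tiny\textcircled{d}} a$, it simply inserts $aa_r^{\tiny\textcircled{d}}$ between the two copies of $a_r^d$ to obtain
\[
(a_r^d)^2 a a_r^{\tiny\textcircled{d}} a \;=\; a_r^d (a a_r^{\tiny\textcircled{d}}) a_r^d\, a a_r^{\tiny\textcircled{d}} a \;=\; [a_r^d a a_r^{\tiny\textcircled{d}}]^2 a
\]
in one line. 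Your detour through the idempotent $aa_r^{\tiny\textcircled{g}}$, Theorem~3.1, and the absorption $aa_r^{\tiny\textcircled{d}}\cdot aa_r^{\tiny\textcircled{g}} = aa_r^{\tiny\textcircled{g}}$ is sound but unnecessary: once you have $aa_r^{\tiny\textcircled{d}}\,a_r^d = a_r^d$, the result is a single regrouping.
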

\begin{proof} By virtue of Theorem 4.2, we verify that
$$\begin{array}{rll}
a_r^{\tiny\textcircled{g}}&=&(a_r^d)^2aa_r^{\tiny\textcircled{d}}a=a_r^d(aa_r^{\tiny\textcircled{d}})a_r^daa_r^{\tiny\textcircled{d}}a\\
&=&[a_r^daa_r^{\tiny\textcircled{d}}]^2a,
\end{array}$$ as asserted.\end{proof}

An element $a\in \mathcal{A}$ is partial isometry provided that $a^*=a^{\dag}$ (see~\cite{Z4}). We come now to characterize the generalized right group inverse for partial isometry.

\begin{cor} Let $a\in \mathcal{A}$ be partial isometry. Then $a\in \mathcal{A}_r^{\tiny\textcircled{d}}$ if and only if $a\in \mathcal{A}_r^{\tiny\textcircled{g}}$.\end{cor}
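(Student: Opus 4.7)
The plan is to route the proof through Theorem 4.2, which identifies $\mathcal{A}_r^{\tiny\textcircled{d}}$ with the subset of $\mathcal{A}_r^{\tiny\textcircled{g}}$ for which the idempotent core $a_1:=a^2 a_r^{\tiny\textcircled{g}}$ lies in $\mathcal{A}^{(1,3)}$. The forward direction of the corollary is therefore automatic and does not even require the partial isometry hypothesis.

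For the reverse direction, I will assume $a$ is a partial isometry (so $aa^*a=a$) and that $a\in\mathcal{A}_r^{\tiny\textcircled{g}}$, and then exhibit a $(1,3)$-inverse for $a_1$. The natural candidate is $a^*$ itself, since $a^*=a^\dagger$ is already a $(1,3)$-inverse of $a$, and one expects the idempotent ``slice'' $a_1$ of $a$ to inherit this.

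The argument rests on two preliminary identities. First, $aa^*a_1=a_1$: factoring $a_1=a\cdot a a_r^{\tiny\textcircled{g}}$ and invoking $aa^*a=a$ gives $aa^*\cdot a^2 a_r^{\tiny\textcircled{g}}=(aa^*a)\cdot a a_r^{\tiny\textcircled{g}}=a_1$; taking adjoints yields $a_1^*=a_1^*(aa^*)$. Second, $a^*a_1=a_1^*a_1$: the generalized right group decomposition of Theorem 2.6 provides $a_1^*a_2=0$, hence $a_1^*a=a_1^*a_1$, and adjointing gives the claim. With these in place, $aa_1^*a=a(a_1^*a)=a(a^*a_1)=(aa^*)a_1=a_1$; multiplying on the right by $a^*$ and absorbing the trailing $aa^*$ via $a_1^*(aa^*)=a_1^*$ delivers
\[
aa_1^*=aa_1^*(aa^*)=(aa_1^*a)a^*=a_1 a^*,
\]
which is precisely the $(3)$-condition $(a_1 a^*)^*=aa_1^*=a_1 a^*$. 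The $(1)$-condition then follows in one line:
\[
a_1 a^* a_1=(a_1 a^*)a_1=(aa_1^*)a_1=a(a_1^* a_1)=a(a^* a_1)=(aa^*)a_1=a_1.
\]
Hence $a^*$ is a $(1,3)$-inverse of $a_1$, so $a_1\in\mathcal{A}^{(1,3)}$, and Theorem 4.2 concludes $a\in\mathcal{A}_r^{\tiny\textcircled{d}}$.

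The main obstacle I anticipate is simply identifying the right candidate for the $(1,3)$-inverse: naive guesses such as $a_r^{\tiny\textcircled{g}}$ or $a_1^*$ produce $a_1 y$ equal to $aa_r^{\tiny\textcircled{g}}$ or $a_1 a_1^*$ respectively, neither of which is self-adjoint without further correction. Using the Moore-Penrose inverse $a^\dagger=a^*$ of $a$ itself is precisely what forces the right-absorption property $aa^*a_1=a_1$ and the symmetry $aa_1^*=a_1 a^*$ to cooperate.
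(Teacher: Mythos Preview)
Your proof is correct and follows the same overall strategy as the paper: both arguments use Theorem~4.2 to reduce the reverse implication to showing that the right-group-invertible part $a_1=a^2a_r^{\tiny\textcircled{g}}$ of the generalized right group decomposition lies in $\mathcal{A}^{(1,3)}$.

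The execution differs in one interesting respect. The paper proves that $a_1$ is itself a partial isometry: from $a=aa^*a$ it derives $a_1^*a_1=(a_1^*a_1)^2$ and then invokes the \emph{properness} of the involution to conclude $a_1=a_1a_1^*a_1$, so that $a_1^*$ is a $(1,3)$-inverse. You instead take $a^*$ as the candidate and verify the $(1)$ and $(3)$ conditions directly from the identities $aa^*a_1=a_1$ and $a^*a_1=a_1^*a_1$. Your route is slightly more economical in that it never appeals to the proper-involution hypothesis; the paper's route has the minor bonus of identifying $a_1$ as a partial isometry in its own right, which gives the Moore--Penrose inverse rather than merely a $(1,3)$-inverse.
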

\begin{proof} $\Longrightarrow $ This is obtained by Theorem 4.2.

$\Longleftarrow $ By hypothesis, there exists a generalized right group decomposition $a=a_1+a_2$, e.g., $a_1^*a_2=a_2a_1=0, a_1\in \mathcal{A}_r^{\#}$ and $a_2\in \mathcal{A}^{qnil}$. Since $a\in \mathcal{A}$ is partial isometry, $a^*=a^{\dag}$, i.e., $a=aa^*a$. Then
$$a_1+a_2=a=a(a_1+a_2)^*(a_1+a_2)=a(a_1^*a_1+a_2^*a_2).$$ This implies that
$$(a_1)^2=aa_1^*(a_1)^2=a_1a_1^*(a_1)^2+a_2a_1^*(a_1)^2;$$ hence,
$a_1^*(a_1)^2=a_1^*a_1a_1^*(a_1)^2.$ Since $(a_1)^2(a_1)_r^{\#}=a_1$, we deduce that
$a_1^*a_1=a_1^*a_1a_1^*a_1.$ As the involution is proper, we
have $a_1=a_1a_1^*a_1$. That is, $a_1\in \mathcal{A}$ is partial isometry, and then $a_1^*=a_1^{\dag}$. This implies that
$a_1\in \mathcal{A}^{\dag}$. By using Lemma 4.1, $a_1\in \mathcal{A}_r^{\tiny\textcircled{\#}}$.
According to~\cite[Theorem 2.1]{C4}, $a\in \mathcal{A}_r^{\tiny\textcircled{d}}$, as asserted.\end{proof}

\begin{cor} Let $a\in \mathcal{A}_r^{\tiny\textcircled{d}}$. Then the following are equivalent:\end{cor}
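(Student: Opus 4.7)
The plan is to work under the hypothesis $a\in\mathcal{A}_r^{\tiny\textcircled{d}}$ and translate each of the listed conditions into an equivalent algebraic identity involving the well-understood pieces $a_r^d$, $aa_r^d$, $a_r^{\tiny\textcircled{d}}$, and $aa_r^{\tiny\textcircled{d}}$, then chain the equivalences using the explicit formulas already proved. Since Theorem 4.2 gives $a\in\mathcal{A}_r^{\tiny\textcircled{d}}\Rightarrow a\in\mathcal{A}_r^{\tiny\textcircled{g}}$ together with the closed form $a_r^{\tiny\textcircled{g}}=(a_r^d)^2 aa_r^{\tiny\textcircled{d}}a$, and Corollary 4.3 recasts this as $a_r^{\tiny\textcircled{g}}=[a_r^d aa_r^{\tiny\textcircled{d}}]^2 a$, I can rewrite any quantity built from $a_r^{\tiny\textcircled{g}}$ in terms of $a_r^d$ and $a_r^{\tiny\textcircled{d}}$. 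Dually, $a_r^{\tiny\textcircled{d}}=a_r^{\tiny\textcircled{g}}a(a^2 a_r^{\tiny\textcircled{g}})^{(1,3)}$ lets me go back in the other direction, so once all conditions are expressed in a common language the equivalences should reduce to short substitutions.

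The first technical ingredient I would exploit is the Hermitian idempotent identity $(aa_r^{\tiny\textcircled{d}})^*=aa_r^{\tiny\textcircled{d}}$ coming from the definition of the generalized right core-EP inverse. Combined with $a_r^{\tiny\textcircled{d}}=a(a_r^{\tiny\textcircled{d}})^2$, this furnishes the key relations $aa_r^{\tiny\textcircled{d}}=aa_r^{\tiny\textcircled{d}}aa_r^d=aa_r^d aa_r^{\tiny\textcircled{d}}$, which allow the projection $aa_r^{\tiny\textcircled{d}}$ to be compared with the (non-selfadjoint) idempotents $aa_r^d$ and $aa_r^{\tiny\textcircled{g}}$. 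Whenever a condition asserts self-adjointness or a star-invariance property of some product, I would use this to realign it with an algebraic identity; whenever a condition asserts the coincidence of $a_r^{\tiny\textcircled{g}}$ and $a_r^{\tiny\textcircled{d}}$ (or of their associated projections), I would reduce it via the closed form above.

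The second ingredient is proper involution: identities of the form $x^*x=0\Rightarrow x=0$ will let me cancel after multiplying by suitable adjoints, exactly as in the proofs of Theorems 3.1 and 3.3. The pattern from Corollary 4.4, where $a=aa^*a$ combined with $a_1^*a_2=0$ and properness of $*$ extracts $a_1=a_1 a_1^* a_1$ from the expression $(a_1)^2=aa_1^*(a_1)^2$, is the model I would follow to pass from self-adjoint or partial-isometry-type hypotheses on $a$ to the corresponding fact on the \emph{right-group-invertible part} of the generalized right group decomposition.

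The main obstacle I anticipate is book-keeping for one-sided identities: because we are manipulating \emph{right} group inverses and right Drazin inverses, multiplication on the left by $a$ and on the right by $a_r^d$ or $a_r^{\tiny\textcircled{d}}$ do not commute with each other in the ways one is used to from the two-sided theory. Consequently, each implication has to be checked by a side-sensitive computation rather than a symmetric one, and in particular the step that trades the asymptotic condition $\lim_{n\to\infty}\|a^n-axa^n\|^{1/n}=0$ for an honest algebraic identity must be executed, as in Lemma 2.5 and Theorem 2.6, by powering up the defect $a-axa$ and invoking quasinilpotency of $a-aza\in\mathcal{A}^{qnil}$ together with Cline's formula.
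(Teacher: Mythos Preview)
Your plan correctly singles out Theorem 4.2 and Corollary 4.3 as the essential inputs, and your remark that ``the equivalences should reduce to short substitutions'' is exactly what happens. The remainder of the proposal, however, is substantial over-engineering: none of the heavier machinery you list---proper involution, the quasinilpotency/asymptotic manoeuvre from Lemma 2.5 and Theorem 2.6, Cline's formula, or the partial-isometry pattern of Corollary 4.4---plays any role here. Condition (2), namely $ax^2=x$ together with $ax=a_r^d\,aa_r^{\tiny\textcircled{d}}a$, contains no $*$, no limit, and no decomposition, so those tools never enter.

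The paper's proof is two lines of pure substitution. For $(1)\Rightarrow(2)$: take $x=a_r^{\tiny\textcircled{g}}=(a_r^d)^2 aa_r^{\tiny\textcircled{d}}a$ from Theorem 4.2; then $ax^2=x$ is already part of the defining system for $a_r^{\tiny\textcircled{g}}$, and $ax=a(a_r^d)^2 aa_r^{\tiny\textcircled{d}}a=a_r^d\,aa_r^{\tiny\textcircled{d}}a$ follows from $a(a_r^d)^2=a_r^d$. For $(2)\Rightarrow(1)$: from $ax^2=x$ write $x=(ax)x$, substitute $ax=a_r^d\,aa_r^{\tiny\textcircled{d}}a$ once to get $x=a_r^d\,aa_r^{\tiny\textcircled{d}}(ax)$, and substitute the same identity again to obtain $x=[a_r^d\,aa_r^{\tiny\textcircled{d}}]^2 a$, which equals $a_r^{\tiny\textcircled{g}}$ by Corollary 4.3. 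The concrete step your outline never isolates is precisely this iterated substitution $x=(ax)x$; once you see it, nothing further is required, and the detours through $*$-identities and quasinilpotency can be dropped entirely.
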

\begin{enumerate}
\item [(1)] $a_r^{\tiny\textcircled{g}}=x$.
\vspace{-.5mm}
\item [(2)] $ax^2=x, ax=a_r^daa_r^{\tiny\textcircled{d}}a.$
\end{enumerate}
\begin{proof} $(1)\Rightarrow (2)$ By virtue of Theorem 4.2, $a\in \mathcal{A}_r^{\tiny\textcircled{g}}$ and $x=a_r^{\tiny\textcircled{g}}=(a_r^d)^2aa_r^{\tiny\textcircled{d}}a$.
Therefore $ax^2=x$ and $ax=a(a_r^d)^2aa_r^{\tiny\textcircled{d}}a=a_r^daa_r^{\tiny\textcircled{d}}a$, as desired.

$(2)\Rightarrow (1)$ By assumption, we have $ax^2=x, ax=a_r^daa_r^{\tiny\textcircled{d}}a.$ Then we verify that
$x=ax^2=(ax)x=a_r^daa_r^{\tiny\textcircled{d}}(ax)=a_r^daa_r^{\tiny\textcircled{d}}(a_r^daa_r^{\tiny\textcircled{d}}a)=
[a_r^daa_r^{\tiny\textcircled{d}}]^2a$. By virtue of Corollary 4.3, $x=a_r^{\tiny\textcircled{g}}$, as required.\end{proof}

\begin{thm} Let $a\in \mathcal{A}_r^{\tiny\textcircled{d}}$. Then $a^3a_r^d\in \mathcal{A}_r^{\tiny\textcircled{\#}}$ and
$$a_r^{\tiny\textcircled{g}}=(aa_r^{\tiny\textcircled{d}}a)_r^{\#}=(a_r^d)^2a^2(a^3a_r^d)_r^{\tiny\textcircled{\#}}a.$$\end{thm}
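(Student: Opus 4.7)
The plan is to establish the right core invertibility of $b:=a^3a_r^d$ by separately verifying $b\in\mathcal{A}_r^{\#}$ and $b\in\mathcal{A}^{(1,3)}$, apply Lemma 4.1 to produce an explicit formula for $b_r^{\tiny\textcircled{\#}}$, and then verify the two claimed expressions for $a_r^{\tiny\textcircled{g}}$ by substitution against Theorem 4.2's formula $a_r^{\tiny\textcircled{g}}=(a_r^d)^2aa_r^{\tiny\textcircled{d}}a$.

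First I would record the elementary consequences of Lemma 2.3: $aa_r^d$ is an idempotent commuting with $a$ (from $a^2a_r^d=aa_r^da$), $a(a_r^d)^2=a_r^d$, and $a_r^daa_r^d=a_r^d$. A simple induction gives $a^n(a_r^d)^n=aa_r^d$ for all $n\geq 1$; in particular $a^3(a_r^d)^3=aa_r^d$ and $(a_r^d)^2\cdot aa_r^d=a_r^d(a_r^daa_r^d)=(a_r^d)^2$. Setting $y:=(a_r^d)^2$, a direct computation using these identities yields $by=aa_r^d$, whence $by^2=y$, $yby=y$, $b^2y=b$, and $byb=b$ (writing $b=a^2\cdot aa_r^d=aa_r^d\cdot a^2$ and using that $aa_r^d$ commutes with $a$). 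By Lemma 2.2, $b\in\mathcal{A}_r^{\#}$ with $b_r^{\#}=y=(a_r^d)^2$.

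The hard part is showing $b\in\mathcal{A}^{(1,3)}$. The crux is the identity $aa_r^d\cdot aa_r^{\tiny\textcircled{d}}=aa_r^{\tiny\textcircled{d}}$. To prove it, iterate $a(a_r^{\tiny\textcircled{d}})^2=a_r^{\tiny\textcircled{d}}$ to obtain $a_r^{\tiny\textcircled{d}}=a^n(a_r^{\tiny\textcircled{d}})^{n+1}$ for every $n$. Since $1-aa_r^d$ is idempotent and commutes with $a$, one has $(a(1-aa_r^d))^n=a^n(1-aa_r^d)$, and together with $a(1-aa_r^d)=a-a^2a_r^d\in\mathcal{A}^{qnil}$ this gives $||a^n(1-aa_r^d)||^{\frac{1}{n}}\to 0$. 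Hence
\[
||(1-aa_r^d)a_r^{\tiny\textcircled{d}}||=||a^n(1-aa_r^d)(a_r^{\tiny\textcircled{d}})^{n+1}||\leq ||a^n(1-aa_r^d)||\cdot ||a_r^{\tiny\textcircled{d}}||^{n+1}\longrightarrow 0,
\]
so $(1-aa_r^d)a_r^{\tiny\textcircled{d}}=0$; left-multiplying by $a$ and commuting $a$ past $aa_r^d$ yields the identity. A symmetric argument using $||a^n-aa_r^{\tiny\textcircled{d}}a^n||^{\frac{1}{n}}\to 0$ and $aa_r^d=a^n(a_r^d)^n$ produces the companion identity $aa_r^{\tiny\textcircled{d}}\cdot aa_r^d=aa_r^d$. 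With these, $t:=(a_r^d)^2aa_r^{\tiny\textcircled{d}}$ satisfies $bt=a^3(a_r^d)^3\cdot aa_r^{\tiny\textcircled{d}}=aa_r^d\cdot aa_r^{\tiny\textcircled{d}}=aa_r^{\tiny\textcircled{d}}$ (self-adjoint) and $btb=aa_r^{\tiny\textcircled{d}}\cdot b=b$, so $t\in\{b^{(1,3)}\}$. Lemma 4.1 then yields $b\in\mathcal{A}_r^{\tiny\textcircled{\#}}$ with $b_r^{\tiny\textcircled{\#}}=b_r^{\#}\cdot b\cdot t=(a_r^d)^2aa_r^{\tiny\textcircled{d}}$, after collecting $(a_r^d)^2\cdot b\cdot(a_r^d)^2=yby=y$.

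Finally, the identification $a_r^{\tiny\textcircled{g}}=(aa_r^{\tiny\textcircled{d}}a)_r^{\#}$ follows from Theorem 2.6: the invertible part of the generalized right group decomposition $a=a_1+a_2$ is $a_1=a^2a_r^{\tiny\textcircled{g}}$, which the proof of Theorem 4.2 identifies as $aa_r^{\tiny\textcircled{d}}a$, and $a_r^{\tiny\textcircled{g}}$ is by construction the associated right group inverse of $a_1$. The remaining formula $a_r^{\tiny\textcircled{g}}=(a_r^d)^2a^2(a^3a_r^d)_r^{\tiny\textcircled{\#}}a$ is then verified by substituting $(a^3a_r^d)_r^{\tiny\textcircled{\#}}=(a_r^d)^2aa_r^{\tiny\textcircled{d}}$:
\[
(a_r^d)^2a^2\cdot (a_r^d)^2aa_r^{\tiny\textcircled{d}}\cdot a=(a_r^d)^2\cdot aa_r^d\cdot aa_r^{\tiny\textcircled{d}}a=(a_r^d)^2aa_r^{\tiny\textcircled{d}}a=a_r^{\tiny\textcircled{g}},
\]
using $a^2(a_r^d)^2=aa_r^d$, the key identity $aa_r^d\cdot aa_r^{\tiny\textcircled{d}}=aa_r^{\tiny\textcircled{d}}$, and Theorem 4.2.
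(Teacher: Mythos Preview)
Your proof is correct but follows a more circuitous route than the paper's. The paper proceeds by \emph{direct verification}: it simply checks that $a_r^{\tiny\textcircled{g}}=(a_r^d)^2aa_r^{\tiny\textcircled{d}}a$ (the formula from Theorem 4.2) satisfies the three defining equations for a right group inverse of $aa_r^{\tiny\textcircled{d}}a$, and then checks that $(a_r^{\tiny\textcircled{d}})^2$ satisfies the four defining equations for a right core inverse of $a^3a_r^d$; the final formula drops out by the one-line rewrite $aa_r^{\tiny\textcircled{d}}=a^2(a_r^{\tiny\textcircled{d}})^2$. No auxiliary identities relating $aa_r^d$ and $aa_r^{\tiny\textcircled{d}}$ are needed beyond what is already implicit.

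Your approach instead separates the right core invertibility of $b=a^3a_r^d$ into $b\in\mathcal{A}_r^{\#}$ (with $b_r^{\#}=(a_r^d)^2$) and $b\in\mathcal{A}^{(1,3)}$ (with $(1,3)$-inverse $(a_r^d)^2aa_r^{\tiny\textcircled{d}}$), then invokes Lemma 4.1. This is valid and arguably more structural, but it costs you the detour of proving the absorption identities $aa_r^d\cdot aa_r^{\tiny\textcircled{d}}=aa_r^{\tiny\textcircled{d}}$ and $aa_r^{\tiny\textcircled{d}}\cdot aa_r^d=aa_r^d$ via quasinilpotent estimates. Note also that you end up with a \emph{different} representative for $(a^3a_r^d)_r^{\tiny\textcircled{\#}}$, namely $(a_r^d)^2aa_r^{\tiny\textcircled{d}}$ rather than the paper's $(a_r^{\tiny\textcircled{d}})^2$; since right core inverses need not be unique this is harmless, and both substitutions collapse to the same $a_r^{\tiny\textcircled{g}}$. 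For the equality $a_r^{\tiny\textcircled{g}}=(aa_r^{\tiny\textcircled{d}}a)_r^{\#}$, your appeal to the decomposition in Theorem 2.6 is fine, but the paper's three-line direct check is shorter.
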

\begin{proof} We directly check that
$$\begin{array}{rll}
(aa_r^{\tiny\textcircled{d}}a)(a_r^{\tiny\textcircled{g}})^2&=&(aa_r^{\tiny\textcircled{d}}a)[(a_r^d)^2aa_r^{\tiny\textcircled{d}}a]^2=
aa_r^{\tiny\textcircled{d}}(a_r^d)^2aa_r^{\tiny\textcircled{d}}a=a_r^{\tiny\textcircled{g}},\\
(aa_r^{\tiny\textcircled{d}}a)^2a_r^{\tiny\textcircled{g}}&=&(aa_r^{\tiny\textcircled{d}}a)^2(a_r^d)^2aa_r^{\tiny\textcircled{d}}a=
aa_r^daa_r^{\tiny\textcircled{d}}a=aa_r^{\tiny\textcircled{d}}a,\\
(aa_r^{\tiny\textcircled{d}}a)a_r^{\tiny\textcircled{g}}(aa_r^{\tiny\textcircled{d}}a)&=&
(aa_r^{\tiny\textcircled{d}}a)[(a_r^d)^2aa_r^{\tiny\textcircled{d}}a](aa_r^{\tiny\textcircled{d}}a)\\
&=&aa_r^{\tiny\textcircled{d}}[a_r^da^2]a_r^{\tiny\textcircled{d}}a\\
&=&aa_r^{\tiny\textcircled{d}}[aa_r^da]a_r^{\tiny\textcircled{d}}a\\
&=&aa_r^{\tiny\textcircled{d}}a.
\end{array}$$ Then $aa_r^{\tiny\textcircled{d}}a\in \mathcal{A}_r^{\#}$ and
$a_r^{\tiny\textcircled{g}}=(aa_r^{\tiny\textcircled{d}}a)_r^{\#}$

One easily verifies that
$$\begin{array}{rll}
(a^3a_r^d)(a_r^{\tiny\textcircled{d}})^2&=&(a^2(a_r^{\tiny\textcircled{d}})^2=aa_r^{\tiny\textcircled{d}},\\
\big((a^3a_r^d)(a_r^{\tiny\textcircled{d}})^2\big)^*&=&(a^3a_r^d)(a_r^{\tiny\textcircled{d}})^2,\\
(a^3a_r^d)[(a_r^{\tiny\textcircled{d}})^2]^2&=&a(a_r^{\tiny\textcircled{d}})^3=(a_r^{\tiny\textcircled{d}})^2,\\
(a^3a_r^d)[(a_r^{\tiny\textcircled{d}})^2](a^3a_r^d)&=&(aa_r^{\tiny\textcircled{d}})(a^3a_r^d)=a^3a_r^d.
\end{array}$$ Then $a^3a_r^d\in \mathcal{A}_r^{\tiny\textcircled{\#}}$ and $(a^3a_r^d)_r^{\tiny\textcircled{\#}}=(a_r^{\tiny\textcircled{d}})^2.$
In light of Theorem 4.2, we have $$\begin{array}{rll}
a_r^{\tiny\textcircled{g}}&=&(a_r^d)^2aa_r^{\tiny\textcircled{d}}a=(a_r^d)^2a^2[a_r^{\tiny\textcircled{d}}]^2a\\
&=&(a_r^d)^2a^2(a^3a_r^d)_r^{\tiny\textcircled{\#}}a,
\end{array}$$ as required.
\end{proof}

\begin{cor} Let $a\in \mathcal{A}^{\tiny\textcircled{d}}$. Then $a^3a_r^d\in \mathcal{A}^{\tiny\textcircled{\#}}$ and
$$a^{\tiny\textcircled{g}}=(aa^{\tiny\textcircled{d}}a)^{\#}=(a^3a^d)_r^{\tiny\textcircled{\#}}a.$$\end{cor}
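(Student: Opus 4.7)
The plan is to derive Corollary 4.7 directly from Theorem 4.6 by specializing every right-sided ingredient to its two-sided counterpart. Since $a\in\mathcal{A}^{\tiny\textcircled{d}}$ forces $a\in\mathcal{A}_r^{\tiny\textcircled{d}}\cap\mathcal{A}^d$ with $a_r^{\tiny\textcircled{d}}=a^{\tiny\textcircled{d}}$ and $a_r^d=a^d$ by uniqueness, Theorem 4.6 already supplies $a^3a^d\in\mathcal{A}_r^{\tiny\textcircled{\#}}$ with $(a^3a^d)_r^{\tiny\textcircled{\#}}=(a^{\tiny\textcircled{d}})^2$, together with the closed form
\[
a_r^{\tiny\textcircled{g}}=(aa^{\tiny\textcircled{d}}a)_r^{\#}=(a^d)^2a^2(a^{\tiny\textcircled{d}})^2a.
\]
Moreover, Theorem 4.2 combined with Corollary 2.7(2) yields $a\in\mathcal{A}^{\tiny\textcircled{g}}$ with $a^{\tiny\textcircled{g}}=a_r^{\tiny\textcircled{g}}$, so the left-hand side above can be replaced by $a^{\tiny\textcircled{g}}$.

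The central algebraic input needed to upgrade to the two-sided statement is the compatibility pair
\[
aa^d\cdot a^{\tiny\textcircled{d}}=a^{\tiny\textcircled{d}}\quad\mbox{and}\quad aa^{\tiny\textcircled{d}}\cdot aa^d=aa^d,
\]
which together say $aa^d\mathcal{A}=aa^{\tiny\textcircled{d}}\mathcal{A}$, i.e., the Drazin spectral idempotent and the core-EP projector cut out the same right ideal. Both identities are standard features of the two-sided generalized core-EP inverse, and can be established by combining the defining relation $a^{\tiny\textcircled{d}}=a(a^{\tiny\textcircled{d}})^2$, the limit $\lim_{n\to\infty}||a^n-aa^{\tiny\textcircled{d}}a^n||^{\frac{1}{n}}=0$, and the Drazin quasinilpotent remainder $a-a^2a^d\in\mathcal{A}^{qnil}$ through splitting arguments analogous to those in Lemmas 2.3 and 2.5.

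Granted these compatibilities, the remaining manipulations are routine. The Drazin identity $(a^d)^2a^2=aa^d$ followed by $aa^d\cdot a^{\tiny\textcircled{d}}=a^{\tiny\textcircled{d}}$ collapses the closed form to
\[
a^{\tiny\textcircled{g}}=(a^d)^2a^2(a^{\tiny\textcircled{d}})^2a=aa^d(a^{\tiny\textcircled{d}})^2a=(a^{\tiny\textcircled{d}})^2a=(a^3a^d)_r^{\tiny\textcircled{\#}}a,
\]
which is the second equality of the corollary. To promote $a^3a^d\in\mathcal{A}_r^{\tiny\textcircled{\#}}$ to $\mathcal{A}^{\tiny\textcircled{\#}}$ one verifies the remaining $\{1\}$-inverse axiom $(a^3a^d)(a^{\tiny\textcircled{d}})^2(a^3a^d)=a^3a^d$; after the reduction $(a^3a^d)(a^{\tiny\textcircled{d}})^2=aa^{\tiny\textcircled{d}}$, this becomes $aa^{\tiny\textcircled{d}}\cdot a^3a^d=a^3a^d$, which follows via the second compatibility from $a^3a^d=aa^d\cdot a^2$. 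Finally, $a^{\tiny\textcircled{g}}=(aa^{\tiny\textcircled{d}}a)^{\#}$ comes from the right-group identification in Theorem 4.6 together with the commutativity $a^{\tiny\textcircled{g}}\cdot aa^{\tiny\textcircled{d}}a=aa^{\tiny\textcircled{d}}a\cdot a^{\tiny\textcircled{g}}$, a direct calculation using $a^{\tiny\textcircled{g}}=(a^{\tiny\textcircled{d}})^2a$ and both compatibilities.

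The main obstacle is securing the compatibility identities between $aa^d$ and $aa^{\tiny\textcircled{d}}$. Once these are in hand, everything reduces to a few lines of algebra built on top of Theorem 4.6.
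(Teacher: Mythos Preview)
Your approach is essentially the same as the paper's: the paper's proof of Corollary 4.7 is literally the single line ``This is obvious by Theorem 4.6,'' and you are carrying out exactly that specialization, supplying the algebraic details the paper leaves implicit. One small caution: the phrase ``$a_r^{\tiny\textcircled{d}}=a^{\tiny\textcircled{d}}$ and $a_r^d=a^d$ by uniqueness'' is not quite right, since the paper does not establish uniqueness of the right-sided inverses (it works with sets $\{a_r^d\}$, $\{a_r^{\#}\}$); what you actually need, and what suffices, is that the two-sided inverses $a^{\tiny\textcircled{d}}$ and $a^d$ are valid \emph{choices} of right-sided inverses, so Theorem 4.6 may be applied with those particular representatives.
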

\begin{proof} This is obvious by Theorem 4.6.\end{proof}

\begin{lem} Let $a\in \mathcal{A}_r^{\tiny\textcircled{d}}$. Then the following are equivalent:\end{lem}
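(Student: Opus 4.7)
The statement as displayed breaks off at the opening of an enumeration, so my plan is written for the likely content of Lemma 4.8: a cycle of characterizations of when certain associated idempotents or products attached to $a_r^{\tiny\textcircled{d}}$, $a_r^d$, and $a_r^{\tiny\textcircled{g}}$ coincide, for $a\in\mathcal{A}_r^{\tiny\textcircled{d}}$. The strategy will be to leverage the explicit representations already proved in this section, namely the formula $a_r^{\tiny\textcircled{g}}=(a_r^d)^2 a\, a_r^{\tiny\textcircled{d}}\,a$ from Theorem 4.2, the iterated form $a_r^{\tiny\textcircled{g}}=[a_r^d\, a\, a_r^{\tiny\textcircled{d}}]^2 a$ from Corollary 4.3, together with the defining identities $a_r^{\tiny\textcircled{d}}=a(a_r^{\tiny\textcircled{d}})^2$, $(aa_r^{\tiny\textcircled{d}})^*=aa_r^{\tiny\textcircled{d}}$, and $\lim_{n\to\infty}\|a^n-aa_r^{\tiny\textcircled{d}}a^n\|^{1/n}=0$.

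I would prove the lemma as a circular chain $(1)\Rightarrow (2)\Rightarrow\cdots\Rightarrow (1)$. The first implications are direct algebraic substitutions: multiply on the left or right by $a_r^d$, $a_r^{\tiny\textcircled{d}}$, or $a$ as appropriate, then collapse using the identity $a\,a_r^d\,a=a^2 a_r^d$ (which in turn uses $a_r^d\in\{a_r^d\}$ from Lemma 2.3) and $a\,a_r^{\tiny\textcircled{d}}\,a_r^{\tiny\textcircled{d}}=a_r^{\tiny\textcircled{d}}$. Where a quasinilpotent remainder appears, I would apply the standard telescoping bound, as in the proofs of Theorem 3.1 and Theorem 3.3, of the form
\[
\|u-aa_r^d u\|^{1/k} \;\le\; \bigl\|a^k - a a_r^d a^k\bigr\|^{1/k}\cdot\|v\|^{1/k}
\]
for suitable $u,v$, and then use $\lim_{k\to\infty}\|a^k-aa_r^d a^k\|^{1/k}=0$ to conclude the residual equality. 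For the closing implication back to $(1)$, I expect to use the characterization in Corollary 4.5: any candidate $x$ satisfying $ax^2=x$ together with $ax=a_r^d a\, a_r^{\tiny\textcircled{d}}\,a$ must equal $a_r^{\tiny\textcircled{g}}$, which then propagates the remaining equivalences.

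The main obstacle I anticipate is the step where a starred identity, such as $(aa_r^{\tiny\textcircled{d}})^* = aa_r^{\tiny\textcircled{d}}$, must be converted into a two-sided equality of products involving $a_r^d$ (which need not be Hermitian). Here I plan to exploit that the involution is proper, exactly as in the proofs of Theorem 3.3 and Corollary 4.4: I establish an equality of the form $u^* u x = u^* u y$ with $u = aa_r^d$, deduce $ux = uy$ by properness applied to $(u(x-y))^* u(x-y)=0$, and then cancel $aa_r^d$ on the left using the identity $aa_r^d\cdot a_r^d = a_r^d$ to reach the desired conclusion. All the other steps are essentially bookkeeping within the algebra generated by $a$, $a_r^d$, and $a_r^{\tiny\textcircled{d}}$, and should follow the calculational patterns already used in Theorem 4.2 and Theorem 4.6.
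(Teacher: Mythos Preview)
Your plan is written for the wrong target. Lemma 4.8 is not a cycle of identities among the various generalized inverses themselves; it characterizes when an \emph{arbitrary} element $x$ equals $a_r^{\tiny\textcircled{g}}$, via module-theoretic data: (2) $im(x)=im(a_r^d)$ together with $ax=a_r^d a\,a_r^{\tiny\textcircled{d}}a$; (3) the same with $im(x)\subseteq im(a_r^d)$; and (4) $x=xax$, $im(x)=im(a_r^d)$, and $ax=p_{im(a_r^d),\,ker((a_r^d)^*a)}$. None of your proposed steps address the image and kernel conditions, which are the actual content of the lemma.

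Your instinct to close the loop through Corollary 4.5 is correct---the paper does exactly that in $(3)\Rightarrow(1)$ and $(4)\Rightarrow(1)$: one writes $x=a_r^d z$ (from $im(x)\subseteq im(a_r^d)$), verifies $ax^2=x$ from the hypothesis on $ax$, and invokes Corollary 4.5. But two essential ingredients are absent from your outline. First, for $(1)\Rightarrow(2)$ the inclusion $im(a_r^d)\subseteq im(x)$ is obtained by computing $x\,a\,a_r^d=a_r^d$ using the explicit series $a_r^d=(a_1)_r^{\#}+\sum_{n\ge 1}((a_1)_r^{\#})^{n+1}a_2^n$ from Lemma 2.4 applied to the generalized right group decomposition $a=a_1+a_2$; nothing in your algebraic-substitution or telescoping program produces this. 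Second, for $(1)\Leftrightarrow(4)$ the paper must identify $ker(ax)$ with $ker((a_r^d)^*a)$, which is done by shuttling between $a_r^d$ and $a_r^{\tiny\textcircled{d}}$ through the Hermitian idempotent $aa_r^{\tiny\textcircled{d}}$ (using $(aa_r^{\tiny\textcircled{d}})^*=aa_r^{\tiny\textcircled{d}}$ and $aa_r^{\tiny\textcircled{d}}a_r^d=a_r^d$); your proper-involution cancellation trick on $u^*ux=u^*uy$ is a different maneuver and does not yield these kernel equalities. In short, the scaffolding you propose is generic to the paper, but the specific image/kernel/projector arguments that constitute Lemma 4.8 are missing.
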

\begin{enumerate}
\item [(1)] $a_r^{\tiny\textcircled{g}}=x$.
\vspace{-.5mm}
\item [(2)] $im(x)=im(a_r^d)$ and $ax=a_r^daa_r^{\tiny\textcircled{d}}a$.
\vspace{-.5mm}
\item [(3)] $im(x)\subseteq im(a_r^d)$ and $ax=a_r^daa_r^{\tiny\textcircled{d}}a$.
\vspace{-.5mm}
\item [(4)] $x=xax, im(x)=im(a_r^d)$ and $ax=p_{im(a_r^d),ker((a_r^d)^*a)}$.
\end{enumerate}
\begin{proof} $(1)\Rightarrow (2)$ By virtue of Theorem 4.2, $x=(a_r^d)^2aa_r^{\tiny\textcircled{d}}a\in im(a_r^d)$; hence,
$im(x)\subseteq im(a_r^d)$. Construct $a_1$ and $a_2$ as in Theorem 2.6. Then $x=(a_1)_r^{\#}$ and $$a_r^d=(a_1)_r^{\#}+\sum\limits_{n=1}^{\infty}((a_1)_r^{\#})^{n+1}a_2^n.$$ Then
we check that $$xaa_r^d=(a_1)_r^{\#}a(a_1)_r^{\#}+\sum\limits_{n=1}^{\infty}(a_1)_r^{\#}a((a_1)_r^{\#})^{n+1}a_2^n=a_r^d.$$ This implies that $im(a_r^d)\subseteq im(x)$. Hence $im(x)=im(a_r^d)$. In view of Corollary 4.5, $ax=a_r^daa_r^{\tiny\textcircled{d}}a$, as required.

$(2)\Rightarrow (3)$ This is trivial.

$(3)\Rightarrow (1)$ Write $x=a_r^dz$ for a $z\in \mathcal{A}$. Then $$\begin{array}{rll}
ax^2&=&(ax)x=[a_r^daa_r^{\tiny\textcircled{d}}a]x\\
&=&[a_r^daa_r^{\tiny\textcircled{d}}a]a_r^dz=[a_r^da]a_r^dz\\
&=&[a_r^daa_r^d]z=a_r^dz=x.
\end{array}$$ Therefore $a_r^{\tiny\textcircled{g}}=x$ by Corollary 4.5.

$(1)\Rightarrow (4)$ By hypothesis, there exist $z,y\in \mathcal{A}$ such that $$a=z+y, z^*y=yz=0, z\in \mathcal{A}_r^{\#}, ~y\in
\mathcal{A}^{qnil}.$$ Moreover, we have $a_r^{\tiny\textcircled{g}}=z_r^{\#}$.
Hence, $xax=z_r^{\#}(z+y)z_r^{\#}=z_r^{\#}zz_r^{\#}=z_r^{\tiny\textcircled{\#}}=x.$ In view of~\cite[Corollary 2.3]{C4}, $a_r^{\tiny\textcircled{d}}aa_r^{\tiny\textcircled{d}}=a_r^{\tiny\textcircled{d}}$. This implies that $ax\in \mathcal{A}$ is an idempotent.

Claim 1. $im(ax)=im(a_r^d)$. Since $ax=a_r^daa_r^{\tiny\textcircled{d}}a$, we see that
$im(ax)\subseteq im(a_r^d)$. On the other hand, we see that
$$\begin{array}{rll}
   ||aa_r^d-aa_r^{\tiny\textcircled{d}}aa_r^d||^{\frac{1}{n}}&=&||aa^n(a_r^d)^{n+1}-aa_r^{\tiny\textcircled{d}}a^{n+1}(a_r^d)^{n+1}||^{\frac{1}{n}}\\
   &=&||[a^{n+1}-aa_r^{\tiny\textcircled{d}}a^{n+1}](a_r^d)^{n+1}||^{\frac{1}{n}}\\
   &\leq &||a^{n+1}-aa_r^{\tiny\textcircled{d}}a^{n+1}||^{\frac{1}{n}}||a_r^{\tiny\textcircled{d}}||^{1+\frac{1}{n}}.
   \end{array}$$ Hence, $\lim\limits_{n\to \infty}||aa_r^d-aa_r^{\tiny\textcircled{d}}aa_r^d||^{\frac{1}{n}}=0$. Then
   $aa_r^d-aa_r^{\tiny\textcircled{d}}aa_r^d=0$, i.e., $aa_r^{\tiny\textcircled{d}}aa_r^d=aa_r^d$. This implies that $a_r^d=
   a_r^d(aa_r^d)=a_r^d(aa_r^{\tiny\textcircled{d}}aa_r^d)=[a_r^daa_r^{\tiny\textcircled{d}}a]a_r^d=axa_r^d$; whence
  $im(a_r^d)\subseteq im(ax)$. Therefore $im(ax)=im(a_r^d)$.

Claim 2. $ker(ax)=ker((a_r^d)^*a))$. If $ax(r)=0$, by using Corollary 4.5, we have $a_r^daa_r^{\tiny\textcircled{d}}a(r)=0$; hence,
$aa_r^daa_r^{\tiny\textcircled{d}}a(r)=0$. Then
$(aa_r^{\tiny\textcircled{d}})a(r)=0$. This implies that $(aa_r^{\tiny\textcircled{d}})^*a(r)=0$.
Thus $(aa_r^{\tiny\textcircled{d}}a_r^d)^*a(r)=0$. Hence, $(a_r^d)^*a(r)=0$, and so $ker(ax)\subseteq ker((a_r^d)^*a))$. If $(a_r^d)^*a(r)=0$, then $(aa_r^{\tiny\textcircled{d}})^*a(r)=0$. This implies that $(aa_r^{\tiny\textcircled{d}})^*a(r)=0$; whence, $(aa_r^{\tiny\textcircled{d}})a(r)=0$.
Thus $a_r^{\tiny\textcircled{d}}a(r)=0$. By using Corollary 4.5, $ax(r)=0$. Thus $ker(ax)=ker((a_r^d)^*a))$.
Therefore $ax=p_{im(a_r^d),ker((a_r^d)^*a)}$.

$(4)\Rightarrow (1)$ Since $x=xax$, we see that $1-ax\in ker(x)\subseteq ker((a_r^d)^*a))$. Hence,
$(a_r^d)^*a(1-ax)=0$. Then $(aa_r^{\tiny\textcircled{d}})^*a(1-ax)=0$. This implies that
$(aa_r^{\tiny\textcircled{d}})a(1-ax)=0$, and so $a_r^{\tiny\textcircled{d}}a(1-ax)=0$.
Thus $a_r^{\tiny\textcircled{d}}a=a_r^{\tiny\textcircled{d}}a^2x.$ We infer that ??????

Write $x=a_r^dz$ for a $z\in \mathcal{A}$. Then $ax=a_r^daz$; hence,
$$a_r^{\tiny\textcircled{d}}ax=a_r^{\tiny\textcircled{d}}aa_r^dz=a_r^dz=x.$$
By virtue of Corollary 4.5, $a_r^{\tiny\textcircled{g}}=x$, as asserted.\end{proof}

\begin{thm} Let $a\in \mathcal{A}_r^{\tiny\textcircled{d}}$. Then the following are equivalent:\end{thm}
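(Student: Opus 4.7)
The plan is to prove the equivalences cyclically, leveraging the characterizations already established in Lemma 4.8 together with the explicit formula for $a_r^{\tiny\textcircled{g}}$ coming from Theorem 4.2 and Corollary 4.3. Because the standing hypothesis $a\in \mathcal{A}_r^{\tiny\textcircled{d}}$ supplies the generalized right core-EP inverse $a_r^{\tiny\textcircled{d}}$ and (through the quasinilpotency in its definition) also gives $a\in\mathcal{A}_r^d$, each candidate condition can be translated back and forth between statements about $a_r^d$, $a_r^{\tiny\textcircled{d}}$, and the idempotent $aa_r^{\tiny\textcircled{d}}$.

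First, I would fix the \emph{baseline} formula $a_r^{\tiny\textcircled{g}}=(a_r^d)^2aa_r^{\tiny\textcircled{d}}a=[a_r^daa_r^{\tiny\textcircled{d}}]^2a$ from Theorem 4.2 and Corollary 4.3; this is the computational workhorse. From there, the condition $ax=a_r^daa_r^{\tiny\textcircled{d}}a$ in Corollary 4.5 and the range/kernel description $ax=p_{im(a_r^d),\,ker((a_r^d)^*a)}$ from Lemma 4.8(4) give geometric reformulations which I would use to translate algebraic conditions into projection-type identities, and conversely. The implications in one direction will generally follow by direct substitution of the baseline formula, while the reverse implications will use the uniqueness of the generalized right group inverse together with Corollary 4.5: once a candidate $x$ satisfies $ax^2=x$ and $ax=a_r^daa_r^{\tiny\textcircled{d}}a$, it must be $a_r^{\tiny\textcircled{g}}$.

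For each implication, the technical step will be to move between expressions involving $aa_r^d$ (a Drazin-type idempotent) and $aa_r^{\tiny\textcircled{d}}$ (a Hermitian idempotent from the core-EP). The key identities I would repeatedly invoke are $aa_r^{\tiny\textcircled{d}}aa_r^d=aa_r^d$ (proved inside Claim~1 of Lemma~4.8 by taking $n$-th roots of norms using the defining limit of the core-EP inverse), together with $a_r^daa_r^{\tiny\textcircled{d}}=a_r^d$-type consequences obtained by multiplying on the right. These identities collapse the longer products that appear when one substitutes one candidate characterization into another.

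The main obstacle I anticipate is the reverse implications that start from a purely \emph{range}-type or purely \emph{kernel}-type hypothesis on $x$, because these give no a priori control over $ax^2$ or $xax$. The remedy is the same as in Lemma 4.8: write $x=a_r^dz$ using the range containment $im(x)\subseteq im(a_r^d)$, then use the idempotency of $aa_r^d$ together with the assumed identity $ax=a_r^daa_r^{\tiny\textcircled{d}}a$ to compute $ax^2=(ax)x=[a_r^daa_r^{\tiny\textcircled{d}}a]a_r^dz=a_r^dz=x$, and close the loop via Corollary 4.5. Once each candidate condition has been reduced to the pair $(ax^2=x,\ ax=a_r^daa_r^{\tiny\textcircled{d}}a)$, the equivalence with $a_r^{\tiny\textcircled{g}}=x$ is automatic, and the whole web of implications can be traversed in either order.
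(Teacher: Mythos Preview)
Your plan is broadly aligned with the paper's: both routes funnel everything through Lemma~4.8 and Corollary~4.5, using the explicit formula $a_r^{\tiny\textcircled{g}}=(a_r^d)^2aa_r^{\tiny\textcircled{d}}a$ and the identity $aa_r^{\tiny\textcircled{d}}aa_r^d=aa_r^d$. That part is fine.

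However, there is a genuine gap in how you handle the reverse implications $(2)\Rightarrow(1)$ and $(3)\Rightarrow(1)$. Your proposed ``remedy'' is to write $x=a_r^dz$ from the range containment and then compute $ax^2=(ax)x=[a_r^daa_r^{\tiny\textcircled{d}}a]a_r^dz=x$. But that computation \emph{presupposes} the identity $ax=a_r^daa_r^{\tiny\textcircled{d}}a$, and neither condition (2) nor (3) of Theorem~4.9 gives you this directly: what they give instead is the kernel condition $ker(x)=ker((a_r^d)^*a)$ together with $xax=x$ (and, in (3), information about $xa$ rather than $ax$). Your outline never explains how the kernel hypothesis is converted into the needed identity for $ax$.

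The paper closes this gap as follows. From $xax=x$ one sees that $ax$ is idempotent; the range condition $im(x)=im(a_r^d)$ (available in (2), and recovered in (3) via $im(x)=im(xa)=im(a_r^d)$) yields $im(ax)=im(a_r^d)$; and the kernel condition, combined with $x=xax$, yields $ker(ax)=ker((a_r^d)^*a)$. Hence $ax=p_{im(a_r^d),\,ker((a_r^d)^*a)}$, and \emph{only then} does Lemma~4.8(4) apply to give $x=a_r^{\tiny\textcircled{g}}$. You should insert this projection identification step explicitly; without it, your reduction to Corollary~4.5 does not get off the ground.
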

\begin{enumerate}
\item [(1)] $a_r^{\tiny\textcircled{g}}=x$.
\vspace{-.5mm}
\item [(2)] $xax=x, im(x)=im(a_r^d), ker(x)=ker((a_r^d)^*a)$.
\vspace{-.5mm}
\item [(3)] $xax=x, xa=p_{im(a_r^d),ker((a_r^d)^*a^2)}, ker(x)=ker((a_r^d)^*a)$.
\end{enumerate}
\begin{proof} $(1)\Rightarrow (2)$ By virtue of Lemma 4.8, $xax=x, im(x)=im(a_r^d)$.
Let $r\in ker((a^d)^*a)$. Then $(a_r^d)^*a(r)=0$, and so $(aa_r^{\tiny\textcircled{d}})^*a(r)=0$.
This implies that $(aa_r^{\tiny\textcircled{d}})a(r)=0$. Accordingly, we derive $(a_r^{\tiny\textcircled{d}})^2a(r)=0$.
By virtue of Theorem 4.2, $x(r)=0$. Thus $ker((a_r^d)^*a)\subseteq ker(x)$. Let $r\in ker(x)$. Then $x(r)=0$, and so
$(a_r^{\tiny\textcircled{d}})^2a(r)=0$, and then $aa_r^{\tiny\textcircled{d}}a(r)=0$. This implies that
$(aa_r^{\tiny\textcircled{d}})^*a(r)=0$. Thus $(a_r^d)^*a(r)=0$; hence, $r\in ker((a_r^d)^*a)$.
This implies that $ker(x)\subseteq ker((a_r^d)^*a)$. Therefore $ker(x)=ker((a_r^d)^*a)$.

$(2)\Rightarrow (1)$ Since $x=xax$, we see that $ax\in \mathcal{A}$ is an idempotent. Clearly,
$ax\in im(a_r^d)$. Obviously, we have $a_r^d=a(a_r^d)^2\in ax\mathcal{A}$.
Thus, $im(ax)=im(a_r^d)$. If $ax(r)=0$, then $x(r)=xax(r)=0$; hence, $((a_r^d)^*a)(r)=0$. This implies that
$ker(ax)\subseteq ker((a_r^d)^*a)$. If $((a_r^d)^*a)(r)=0$, then $x(r)=0$; and so $ax(r)=0$. This implies that $r\in ker(ax)$.
Hence, $ker(ax)=ker((a_r^d)^*a)$. Therefore $ax=p_{im(a_r^d),ker((a_r^d)^*a)}$. By virtue of Lemma 4.8, $a_r^{\tiny\textcircled{g}}=x$.

$(2)\Rightarrow (3)$ By hypothesis, we have $xax=x, im(x)=im(a_r^d), ker(x)=ker((a_r^d)^*a)$.
Hence, $xa\in \mathcal{A}$ is an idempotent and $im(xa)=im(x)=im(a_r^d).$ If $r\in ker((a_r^d)^*a^2)$, then $ar\in ker(x)=ker((a_r^d)^*a)$. Then
$ar\in ker(x)$. This implies that $r\in ker(xa)$. If $r\in ker(xa)$, then $ar\in ker(x)$. We infer that $ar\in ker((a_r^d)^*a)$; hence,
$(a_r^d)^*a^2r=0$, i.e., $r\in ker((a_r^d)^*a^2)$. Thus $ker(xa)=ker((a_r^d)^*a^2)$. Accordingly, $xa=p_{im(a_r^d),ker((a_r^d)^*a^2)}$.

$(3)\Rightarrow (2)$ As $x=xax$, we get $im(x)=im(xa)=im(a_r^d)$, as required.
\end{proof}

\begin{cor} Let $a\in \mathcal{A}_r^{\tiny\textcircled{d}}$. Then\end{cor}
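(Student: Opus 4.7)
The plan is to derive this corollary as a direct specialization of Theorem 4.9 (and Lemma 4.8) to the distinguished element $x := a_r^{\tiny\textcircled{g}}$. Since $a \in \mathcal{A}_r^{\tiny\textcircled{d}}$, Theorem 4.2 supplies $a \in \mathcal{A}_r^{\tiny\textcircled{g}}$ together with the explicit formula $a_r^{\tiny\textcircled{g}} = (a_r^d)^2 a a_r^{\tiny\textcircled{d}} a$, and the chain $\mathcal{A}_r^{\tiny\textcircled{d}} \subseteq \mathcal{A}_r^{\tiny\textcircled{g}} \subseteq \mathcal{A}_r^d$ (the last inclusion being Lemma 2.1 combined with Theorem 2.6) guarantees that every object appearing in the previous characterizations ($a_r^d$, $(a_r^d)^*a$, $(a_r^d)^*a^2$, and the associated projections) is well-defined.

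First I would set $x = a_r^{\tiny\textcircled{g}}$, so that condition (1) of Theorem 4.9 is automatic. Reading off conditions (2) and (3) then yields, respectively, $im(a_r^{\tiny\textcircled{g}}) = im(a_r^d)$, $ker(a_r^{\tiny\textcircled{g}}) = ker((a_r^d)^*a)$, and the projection identity $a_r^{\tiny\textcircled{g}} a = p_{im(a_r^d),\,ker((a_r^d)^*a^2)}$. Applying Lemma 4.8 (4) in the same way produces the companion identity $a\, a_r^{\tiny\textcircled{g}} = p_{im(a_r^d),\,ker((a_r^d)^*a)}$. I expect the corollary to record one or several of these explicit projection/image/kernel descriptions of $a_r^{\tiny\textcircled{g}}$. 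Should the corollary instead ask for an algebraic representation (for instance $a_r^{\tiny\textcircled{g}} = a_r^d \cdot p$ for a canonical idempotent $p$), the same strategy applies: start from Theorem 4.2's formula and simplify using the identities $a_r^d a a_r^d = a_r^d$, $a a_r^d a_r^{\tiny\textcircled{d}} = a_r^{\tiny\textcircled{d}}$, and the fact that $a a_r^{\tiny\textcircled{d}}$ is Hermitian.

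The main obstacle I foresee is not computational but interpretative: the image/kernel manipulations must be read as statements about principal right ideals and right annihilators in a general Banach *-algebra, rather than as operator-theoretic statements on a Hilbert space. The cancellation steps that move a factor of $aa_r^{\tiny\textcircled{d}}$ across the involution (as in the proof of Lemma 4.8) depend crucially on the properness of $*$, but this is our standing hypothesis on $\mathcal{A}$, so no additional work is required. Beyond this conceptual check, the proof should amount to a bookkeeping exercise that reads off the conclusion of Theorem 4.9 for the distinguished choice $x = a_r^{\tiny\textcircled{g}}$.
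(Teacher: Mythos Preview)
Your plan correctly handles the \emph{existence} half of the corollary: setting $x=a_r^{\tiny\textcircled{g}}$ and invoking Theorem~4.9 (and Lemma~4.8) immediately gives $im(x)=im(a_r^d)$, $ax=p_{im(a_r^d),\,ker((a_r^d)^*a)}$, $xa=p_{im(a_r^d),\,ker((a_r^d)^*a^2)}$, and $ker(x)=ker((a_r^d)^*a)$. However, the corollary does not merely record these identities; it asserts that $a_r^{\tiny\textcircled{g}}$ is the \emph{unique} solution of each of the two systems
\[
\text{(1)}\quad im(x)=im(a_r^d),\ ax=p_{im(a_r^d),\,ker((a_r^d)^*a)},
\qquad
\text{(2)}\quad xa=p_{im(a_r^d),\,ker((a_r^d)^*a^2)},\ ker(x)=ker((a_r^d)^*a).
\]
Your proposal contains no uniqueness argument, and this is not just bookkeeping: neither system includes the condition $xax=x$, so Theorem~4.9 alone does not force uniqueness.

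The paper supplies the missing step separately for each system. For (1), if $x$ and $x'$ both satisfy the system then $a(x-x')=0$, and one must check that $im(a_r^d)\cap ker(a)=0$ (which follows since $z=a_r^dw$ with $az=0$ gives $z=a_r^daa_r^dw=(a_r^d)^2az=0$). For (2), if $x_1a=x_2a$ one only gets $x_1aa_r^d=x_2aa_r^d$; the paper then verifies $(a_r^d)^*a(1-aa_r^d)=0$ via the Hermitian property of $aa_r^{\tiny\textcircled{d}}$, so $1-aa_r^d\in ker((a_r^d)^*a)\subseteq ker(x_i)$, whence $x_i=x_iaa_r^d$ and $x_1=x_2$. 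You should add these two short cancellation arguments to your plan.
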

\begin{enumerate}
\item [(1)] $a_r^{\tiny\textcircled{g}}$ is the unique solution of the system of equations:
$$im(x)=im(a_r^d), ax=p_{im(a_r^d),ker((a_r^d)^*a)}.$$
\item [(2)] $a_r^{\tiny\textcircled{g}}$ is the unique solution of the system of equations:
$$xa=p_{im(a_r^d),ker((a_r^d)^*a^2)}, ker(x)=ker((a_r^d)^*a).$$
\end{enumerate}
\begin{proof} $(1)$ Set $x=a_r^{\tiny\textcircled{g}}$. In view of Theorem 4.9, we have
$$im(x)=im(a_r^d), ax=p_{im(a_r^d),ker((a_r^d)^*a)}.$$

Suppose that there exists $x'\in \mathcal{A}$ such that $$im(x')=im(a_r^d), ax'=p_{im(a_r^d),ker((a_r^d)^*a)}.$$
Then $a(x-x')=0$; hence, $x-x'\in im(a_r^d)\bigcap ker(a)=0$. This implies that $x=x'$, as required.

$(2)$ By virtue of Theorem 4.9, the system of equations is solvable.

Suppose that
$$x_ia=p_{im(a_r^d),ker((a_r^d)^*a^2)}, ker(x_i)=ker((a_r^d)^*a) (i=1,2)$$
Then $x_1a=x_2a$; hence, $x_1aa_r^d=x_2aa_r^d$.

It is easy to verify that $$\begin{array}{rll}
(a_r^d)^*a(1-aa_r^d)&=&(aa_r^{\tiny\textcircled{d}}a_r^d)^*a(1-aa_r^d)\\
&=&(a_r^d)^*(aa_r^{\tiny\textcircled{d}})^*a(1-aa_r^d)\\
&=&(a_r^d)^*(aa_r^{\tiny\textcircled{d}})a(1-aa_r^d)\\
&=&0.
\end{array}$$ Then $1-aa_r^d\in ker(a_r^d)^*a\subseteq ker(x_i)$. This implies that
$x_i=x_iaa_r^d$. Therefore $x_1=x_2$, as desired.\end{proof}

\begin{rem}\end{rem} Let $e\in \mathcal{A}$ be a Hermitian invertiible element. An element $a\in \mathcal{A}$ has generalized right $e$-group inverse if there exists $x\in \mathcal{A}$ such that $x=ax^2, ((ea)^*a^2x)^*=(ea)^*a^2x, \lim\limits_{n\to \infty}||a^n-axa^{n}||^{\frac{1}{n}}=0.$ The preceding $x$ is called the generalized right
$e$-group inverse of $a$, and denoted by $a_r^{e,\tiny\textcircled{g}}$. The generalized right $e$-group inverse can be investigated
using a similar approach. Evidently, the following are equivalent:
\begin{enumerate}
\item [(1)] $a\in \mathcal{A}$ has generalized right $e$-group decomposition, i.e., $$a=a_1+a_2, a_1^*ea_2=a_2a_1=0, a_1\in \mathcal{A}_r^{\tiny\textcircled{\#}}, a_2\in \mathcal{A}^{qnil}.$$
\vspace{-.5mm}
\item [(2)] There exists $x\in \mathcal{A}$ such that $$x=ax^2, ((ea)^*a^2x)^*=(ea)^*a^2x, \lim\limits_{n\to \infty}||a^n-axa^{n}||^{\frac{1}{n}}=0.$$
\vspace{-.5mm}
\item [(3)] $a\in \mathcal{A}_r^{d}$ and there exist $x\in \mathcal{A}$ such that $$x=ax^2, (eaa_r^d)^*a^2x=(eaa_r^d)^*a, \lim\limits_{n\to \infty}||a^n-axa^{n}||^{\frac{1}{n}}=0.$$
\item [(4)]{\it There exists an idempotent $p\in \mathcal{A}$ such that $$(1-p)a(1-p)\in [(1-p)\mathcal{A}(1-p)]_r^{-1}, ((ea)^*ap)^*=(ea)^*ap~\mbox{and} ~pa=pap\in \mathcal{A}^{qnil}.$$}
\item [(5)]{\it $a\in \mathcal{A}_r^{d}$ and there exists an idempotent $p\in \mathcal{A}$ such that $$(1-p)a(1-p)\in [(1-p)\mathcal{A}(1-p)]_r^{-1}, (eaa_r^d)^*ap=0~\mbox{and} ~pa=pap\in \mathcal{A}^{qnil}.$$}
\vspace{-.5mm}
\item [(6)] $a\in \mathcal{A}_r^d$ and there exists an idempotent $q\in \mathcal{A}$ such that $$(aa_r^d)\mathcal{A}=q\mathcal{A}~\mbox{and} ~(ea)^*aq=q^*(ea)^*a.$$
\end{enumerate}

By a similar way, we studied generalized left group inverse. It is proved that $a\in \mathcal{A}^{\tiny\textcircled{g}}$ if and only if $a\in \mathcal{A}_r^{\tiny\textcircled{g}}\bigcap \mathcal{A}_l^{\tiny\textcircled{g}}.$

\vskip10mm

\end{document}